\newcommand{\tphi}{\tilde{\phi}}
\newcommand{\tpsi}{\tilde{\psi}}
\newtheorem{thm}{Theorem}
\begin{document}

\title[Wave constant-mean-curvature equation]{Absence of Self-Similar
  Blow-up and Local Well-posedness for the Constant Mean-Curvature
  Wave Equation}
\author{Sagun Chanillo}
\address{Department of Mathematics, Rutgers, the State University of New Jersey, 110 Frelinghuysen Road, Piscataway, NJ 08854}
\email{chanillo@math.rutgers.edu}
\author{Po-Lam Yung}
\address{Department of Mathematics, the Chinese University of Hong Kong, Shatin, Hong Kong}
\email{plyung@math.cuhk.edu.hk}

\begin{abstract}
In this note, we consider the constant-mean-curvature wave equation in $(1+2)$-dimensions. We show that it does not admit any self-similar blow-up. We also remark that the equation is locally well-posed for initial data in $\dot{H}^{3/2}$.
\end{abstract}

\maketitle

\section{Introduction}

The study of nonlinear wave equations has attracted a lot of interest in recent years. One equation of interest is called the wave map. Recall that a wave map from the Minkowski space $\mathbb{R}^{n} \times \mathbb{R}$ into a $m$-dimensional Riemannian manifold $M$ is a map $u \colon \mathbb{R}^n \times \mathbb{R} \to M$, which in local coordinates satisfies
\begin{equation} \label{eq:wavemap}
(- \partial_t^2 + \Delta) u^a = -\Gamma^a_{bc}(u) (\partial^{\alpha} u^b) (\partial_{\alpha} u^c).
\end{equation}
Here $\{u^a\}_{a=1}^m$ are the local coordinates of the point $u(x,t)$ in $M$, $\Gamma^a_{bc}$ are the Christoffel symbols of $M$ in the corresponding local coordinates, and we use Einstein's summation notation, so that repeated indices on the right hand side are summed (the sum of $\alpha$ is over all $\alpha = 0,1,\dots,n$; we point out that the indices $\alpha$ are raised using the Minkowski metric on $\mathbb{R}^{n} \times \mathbb{R}$). Thus wave maps are just wave analogs of harmonic maps, which has been intensely studied for their connections to geometry and physics. 

The equation (\ref{eq:wavemap}) is critical for initial data in $\dot{H}^{\frac{n}{2}} \times \dot{H}^{\frac{n}{2}-1}$. Well-posedness for initial data slightly above, or at critical regularity, has been studied by many authors; we recall some of these shortly. One important aspect that arise in the analysis of wave maps is the \emph{null structure} of the non-linearity of the wave map equations. This has long been recognized, since the pioneering work of Klainerman and Machedon \cite{MR1231427}. The simplest prototypes of null forms can be displayed as follows:
\begin{align}
Q_{00}(u,v) &= -(\partial_t u)(\partial_t v) + \nabla_x u \cdot \nabla_x v \notag \\ 
Q_{ij}(u,v) &= (\partial_{x_i} u) (\partial_{x_j} v) - (\partial_{x_j} u) (\partial_{x_i} v), \quad i,j \in \{1,\dots,n\} \label{Qij} \\
Q_{0j}(u,v) &= (\partial_t u)(\partial_{x_j}v) - (\partial_{x_j} u) (\partial_t v), \quad i \in \{1,\dots,n\}. \notag
\end{align}
Null forms of type $Q_{00}$ arise in the study of wave maps. Using this null structure and the wave Sobolev $X^{s,b}$ spaces, Klainerman-Machedon \cite{MR1381973}, \cite{MR1446618} and Klainerman-Selberg \cite{MR1452172}, \cite{MR1901147} established subcritical local well-posedness for initial data in $H^s \times H^{s-1}$, $s > n/2$. At the critical regularity, one needs to bring in more geometric structures of the wave map equations, and write the equation in appropriate gauges. Tataru \cite{MR2130618} proved that if the target $M$ is uniformly isometrically embedded into some Euclidean space, then one has global existence for smooth initial data that has small $\dot{H}^{n/2} \times \dot{H}^{n/2-1}$ norm, with control of the $L^{\infty}_t \dot{H}^{n/2}_x$ norm of the solution. The case where the target $M$ is a sphere was obtained earlier in Tao \cite{MR1820329}, \cite{MR1869874} by introducing what is called a microlocal gauge, and the case where $n = 2$ and $M =$ the hyperbolic plane was also in Krieger \cite{MR2094472}. Krieger-Schlag \cite{MR2895939} later extended this latter result to the case of large initial data. Furthermore, Sterbenz-Tataru \cite{MR2657818}, \cite{MR2657817} proved that one has global existence and regularity for wave maps from $\mathbb{R}^{n}\times \mathbb{R}$ into $M$ if the energy of initial data is smaller than the energy of any nontrivial harmonic map $\mathbb{R}^n \to M$. 

On the other hand, Krieger-Schlag-Tataru \cite{MR2372807} proved the existence of equivariant finite time blow-up solutions for the wave map problem from $\mathbb{R}^{1+2}$ to $\mathbb{S}^2$. Later, Rodnianski-Sterbenz \cite{MR2680419} and Raphael-Rodnianski \cite{MR2929728} considered corotational wave maps from 2 space dimensions into the sphere $\mathbb{S}^2$ with initial data in $\dot{H}^1 \times L^2$, and exhibited an open subset of initial data in any given homotopy class that leads to finite time blow up. In fact they have also obtained a rather precise blow-up rate of the blow-ups they constructed.

In this paper, we study another system of wave equations, with a different null-structure. It is the wave analog of the equation that prescribes constant mean curvature. We call it the wave constant-mean-curvature equation (or wave CMC for short). The equation will be for maps $u$ which are defined on a domain in the (2+1)-dimensional Minkowski space $\mathbb{R}^{1+2}$, on which we use coordinates $(t,x,y)$, and which maps into $\mathbb{R}^3$. In fact, a map $u \colon [0,T] \times \mathbb{R}^2 \to \mathbb{R}^3$ is said to be a solution of the wave CMC, if on $ [0,T] \times \mathbb{R}^2 $ we have
\begin{equation} \label{eq:waveCMC}
(-\partial_t^2 + \Delta) u = 2 u_x \wedge u_y 
\end{equation}
where $\Delta$ is the Laplacian on $\mathbb{R}^2$ acting componentwise on the three components of $u$, and $u_x \wedge u_y$ is the cross product of the two vectors $u_x$ and $u_y$ in $\mathbb{R}^3$ (hence the null form $Q_{12}$ arises here). The stationary (elliptic) analog of this equation is
\begin{equation} \label{eq:CMC}
\Delta u = 2 u_x \wedge u_y.
\end{equation}
This is an interesting equation because if $u$ solves $\Delta u = 2H
u_x \wedge u_y$ for some function $H$ on $\mathbb{R}^2$ and satisfies
the conformal conditions $|u_x| = |u_y| = 1$ and $u_x \cdot u_y = 0$
everywhere, then the image of $u$ is a surface with mean curvature $H$
in $\mathbb{R}^3$. (\ref{eq:CMC}) is the special case of the equation
$\Delta u = 2H u_x \wedge u_y$ when $H \equiv 1$; hence the name wave
CMC for (\ref{eq:waveCMC}). We recall that (\ref{eq:CMC}) has been
studied by many authors in connection to semi-linear elliptic systems
of partial differential equations; see e.g. Hildebrandt
\cite{MR0256276}, Wente \cite{MR0243467}, \cite{MR0374673},
Brezis-Coron \cite{MR733715}, \cite{MR784102}, Struwe \cite{MR823116},
\cite{MR774369}, \cite{MR926524}, Chanillo-Malchiodi \cite{MR2154671}
and Caldiroli-Musina \cite{MR2221202}. In particular, bubbling phenomena for (\ref{eq:CMC}) was first studied by Brezis-Coron \cite{MR784102}, and a more refined bubbling analysis was done in Chanillo-Malchiodi \cite{MR2154671}. 

We also remind the reader that in Chanillo-Yung \cite{MR3010056}, Theorem 7, it is shown that (\ref{eq:waveCMC}) blows up in finite time if the initial energy exceeds that of the primary bubble of \cite{MR784102}, the Riemann sphere with winding number one. Thus we are naturally led to understanding possible natures of the blow-up in \cite{MR3010056}. 

Another motivation for us in studying (\ref{eq:waveCMC}) comes from the study of the energy-critical focusing semilinear wave equation. It is an equation for a scalar-valued function $u$ on $\mathbb{R}^{1+n}$, $n \geq 3$, given by
\begin{equation} \label{eq:waveYamabe}
(\partial_t^2 - \Delta) u = |u|^{4/(n-2)} u,
\end{equation}
and it is also sometimes called the wave Yamabe equation, since it is the wave analog of the Yamabe equation
$$
-\Delta u = |u|^{4/(n-2)} u
$$
in conformal geometry.  Kenig and Merle have developed in \cite{MR2461508} a concentration compactness-rigidity approach to establish global well-posedness of (\ref{eq:waveYamabe}) under a suitable class of initial data. A fundamental step in their work is the following result:
\begin{thm}[Kenig-Merle \cite{MR2461508}] \label{thm:KM}
Suppose $3 \leq n \leq 5$, and $(u_0,u_1) \in \dot{H}^1 \times L^2$ on $\mathbb{R}^n$ is such that
$$
\int_{\mathbb{R}^n} \nabla u_0 u_1 = 0, \quad \|\nabla u_0\|_{L^2} < \|\nabla W\|_{L^2}, \quad \text{and} \quad E(u_0,u_1) < E(W,0),
$$
where $W(x) = \left(1 + \frac{|x|^2}{n(n-2)} \right)^{-\frac{n-2}{2}}$ is the `groundstate' stationary solution to (\ref{eq:waveYamabe}), and
$$
E(u_0,u_1):= \int_{\mathbb{R}^n} \frac{1}{2} (|\nabla u_0|^2 + |u_1|^2) - \frac{n-2}{2n} |u|^{\frac{2n}{n-2}} 
$$
is the conserved energy of (\ref{eq:waveYamabe}). Suppose also that $u \colon [-1,0) \times \mathbb{R}^n \to \mathbb{R}$ is a solution to (\ref{eq:waveYamabe}), with initial data $\left. u \right|_{t = -1} = u_0$, $\left. \partial_t u \right|_{t = -1} = u_1$. For $t \in [-1,0)$, let 
$$
U_t(x) = (-t)^{n/2} (\nabla u)(t, -tx), \quad V_t(x) = (-t)^{n/2} (\partial_t u)(t, -tx).
$$
If the solution $u(t,x)$ does not extend beyond $t = 0$, then the set
$$
\left\{ (U_t,V_t) \colon t \in [-1,0) \right\} 
$$
cannot have compact closure in $\dot{H}^1 \times L^2$.
\end{thm}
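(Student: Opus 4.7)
The plan is to argue by contradiction: suppose that the trajectory $K = \{(U_t, V_t) : t \in [-1, 0)\}$ has compact closure in $\dot{H}^1 \times L^2$, and derive from this that $u$ must extend past $t = 0$. The overall structure is the rigidity half of the Kenig--Merle concentration-compactness argument: compactness forces $u$ into a self-similar form inside a backward light cone, and the sub-threshold hypotheses then force that self-similar profile to vanish.

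First I would reinterpret compactness as a concentration statement for $u$. The rescalings in the definitions of $U_t$ and $V_t$ are precisely the ones that preserve the $\dot{H}^1 \times L^2$ norm, so the preimage of a compact set concentrates in the self-similar region $|x| \lesssim |t|$; quantitatively, for every $\varepsilon > 0$ there exists $R > 0$ with
\begin{equation*}
\int_{|x| \geq R|t|} \bigl(|\nabla u|^2 + |\partial_t u|^2\bigr)(t,x)\, dx < \varepsilon \quad \text{for every } t \in [-1, 0).
\end{equation*}
Working on the backward light cone $\Gamma = \{(t,x) : |x| \leq -t\}$ and using finite speed of propagation, the standard energy identity shows that the local energy $\int_{|x| \leq -t}\bigl(|\nabla u|^2 + |\partial_t u|^2\bigr)\, dx$ plus potential terms is non-increasing in $t$, with the decrement between $t_1 < t_2$ equal to a non-negative flux through the lateral boundary. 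The concentration above, combined with the zero-momentum condition $\int \nabla u_0 \cdot u_1 = 0$, forces the flux from $t$ to $0$ to vanish as $t \to 0^-$.

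Next, I would apply a Morawetz-type identity on $\Gamma$ using the scaling multiplier $r \partial_r u + \tfrac{n-2}{2} u$. The boundary terms stay uniformly bounded thanks to compactness of $K$, the lateral contribution vanishes with the flux, and the bulk term is non-negative and equals (up to constants) an integral that is zero precisely on self-similar profiles. This forces
\begin{equation*}
u(t,x) = (-t)^{-(n-2)/2}\, V\bigl(x/(-t)\bigr) \quad \text{inside } \Gamma
\end{equation*}
for some $V \in \dot{H}^1$ supported in $|y| \leq 1$. Substituting into \eqref{eq:waveYamabe} yields a degenerate elliptic equation for $V$ on $|y| < 1$ with first-order drift coming from the self-similar scaling. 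The sub-threshold conditions $\|\nabla u_0\|_{L^2} < \|\nabla W\|_{L^2}$ and $E(u_0, u_1) < E(W, 0)$ are scale-invariant and pass to $V$; a Pohozaev-type identity for the elliptic equation, combined with the sharp Sobolev inequality (whose extremal is $W$), then forces $V \equiv 0$.

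Hence $u \equiv 0$ inside $\Gamma$. Combining this with the smallness of energy outside the cone and standard small-data persistence of regularity for the energy-critical focusing wave equation, I would conclude that $u$ extends past $t = 0$, contradicting the hypothesis. The hardest step will be the self-similar reduction: turning the compactness hypothesis together with vanishing flux into the genuinely pointwise self-similar form for $u$ inside $\Gamma$. This requires the correct Morawetz multiplier, a clean identification of the non-negative bulk integral whose vanishing characterizes self-similarity, and a careful use of the zero-momentum normalization to ensure that the flux argument really closes rather than leaking a nontrivial travelling-wave component.
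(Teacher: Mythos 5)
A point of orientation first: the paper does not prove this statement at all. It is quoted as background from Kenig--Merle \cite{MR2461508}, and the authors only remark that its proof occupies a large part of Section 6 of that paper and culminates in a unique continuation problem for a degenerate elliptic equation (Proposition 6.12 there). So your proposal can only be measured against the Kenig--Merle argument itself, and against the route the present paper explicitly points to.

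Measured that way, your outline has the right global shape (rigidity inside the backward light cone, self-similar variables, sub-threshold hypotheses providing coercivity), but two of its load-bearing steps have genuine gaps. First, the claim that compactness of $\{(U_t,V_t)\}$ plus vanishing lateral flux forces the \emph{exact} pointwise self-similar form $u(t,x)=(-t)^{-(n-2)/2}V(x/(-t))$ in $\Gamma$ is not justified and is not what the actual argument delivers. The Morawetz identity with the multiplier $r\partial_r u+\frac{n-2}{2}u$ has boundary contributions precisely on the mantle $|x|=-t$, where the relevant weights degenerate, and its bulk term does not characterize self-similarity of $u$ itself; Kenig--Merle instead pass to regularized self-similar variables (keeping $|y|$ strictly inside the unit ball), build a monotone Lyapunov functional whose dissipation controls $\partial_s w$ with a weight degenerating at $|y|=1$, and only extract a \emph{stationary limit} $w^*$ along a sequence of times. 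That extraction, with its uniform estimates, is the bulk of the proof and cannot be compressed into one virial identity. Second, your mechanism for killing the profile (Pohozaev identity plus the sharp Sobolev inequality with extremal $W$) does not apply as stated: the limit profile solves a degenerate elliptic equation in the unit ball with drift terms and weights blowing up at $|y|=1$, not the stationary equation for (\ref{eq:waveYamabe}), so comparison with $W$ via the sharp Sobolev constant is not available, and the Pohozaev boundary terms sit exactly at the degenerate sphere. What Kenig--Merle actually do is prove decay (vanishing to infinite order) of $w^*$ at $|y|=1$ and then invoke unique continuation for the degenerate operator (their Proposition 6.12) to conclude $w^*\equiv 0$; the sub-threshold conditions enter earlier, through variational energy trapping and coercivity, not through a final sharp-constant comparison. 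Relatedly, the contradiction is with the previously established nontriviality of $w^*$ (a lower bound on the energy in the cone forced by blow-up and the small-data theory), rather than your concluding step of extending $u$ past $t=0$; the latter could likely be repaired, but the two gaps above are the essential missing content.
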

Theorem~\ref{thm:KM} occupies a substantial portion of Section 6 of \cite{MR2461508}, leading to a unique continuation problem for a degenerate elliptic equation, Proposition 6.12 in \cite{MR2461508}. In particular, this rules out the existence of self-similar solutions to (\ref{eq:waveYamabe}).

Our future goal is to also apply concentration compactness-rigidity method to study equation (\ref{eq:waveCMC}). As a first step, we show in this paper that the wave CMC (\ref{eq:waveCMC}) does not admit self-similar blow-ups. More precisely, we prove in Section~\ref{sect:ssblowup} the following result:

\begin{thm} \label{thm:ssblowup}
Suppose $v \in C^2(\mathbb{R}^2, \mathbb{R}^3)$ is such that
\begin{equation} \label{eq:sbuform}
u(x,y,t) = v(\frac{x}{t},\frac{y}{t})
\end{equation} 
is a solution to the wave CMC
$$
(-\partial_t^2 + \Delta) u = 2 u_x \wedge u_y \quad \text{on $\{t > 0\}$}.
$$
Then $v$ is constant on $\mathbb{D}$, where $\mathbb{D}$ is the unit disc on $\mathbb{R}^2$.
\end{thm}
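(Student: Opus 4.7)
The plan is to reduce the self-similar ansatz to a degenerate elliptic equation on $\mathbb{D}$ and then exploit the null structure of the cross-product nonlinearity, together with a boundary analysis, to force $v$ to be constant.

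First, setting $(\xi,\eta)=(x/t, y/t)$ and using $u_t=-\frac{1}{t}(\xi v_\xi+\eta v_\eta)$, $u_x=\frac{1}{t}v_\xi$, $u_y=\frac{1}{t}v_\eta$, a direct computation yields the reduced equation
\begin{equation*}
(1-\xi^2)v_{\xi\xi}-2\xi\eta\, v_{\xi\eta}+(1-\eta^2)v_{\eta\eta}-2(\xi v_\xi+\eta v_\eta)=2\,v_\xi\wedge v_\eta
\end{equation*}
on all of $\mathbb{R}^2$. This equation is uniformly elliptic on $\mathbb{D}$ and degenerates on $\partial\mathbb{D}$. In polar coordinates it reads $(1-r^2)v_{rr}+\frac{1-2r^2}{r}v_r+\frac{1}{r^2}v_{\theta\theta}=\frac{2}{r}\,v_r\wedge v_\theta$. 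Taking the dot product with $v_r$ makes the cross-product term drop out since $v_r\cdot(v_r\wedge v_\theta)=0$; after multiplying through by $2r^2$ and regrouping, one obtains the pointwise conservation law
\begin{equation*}
\partial_r\bigl[r^2(1-r^2)|v_r|^2-|v_\theta|^2\bigr]+2\,\partial_\theta(v_r\cdot v_\theta)=0.
\end{equation*}

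Integrating this identity over $\mathbb{D}$ in polar coordinates, the $\partial_\theta$ term vanishes by periodicity, and $|v_\theta|^2=O(r^2)$ near the origin because $v\in C^2(\mathbb{R}^2)$, leaving $\int_0^{2\pi}|v_\theta(1,\theta)|^2\,d\theta=0$. Thus $v|_{\partial\mathbb{D}}$ is some constant $c$. Evaluating the polar equation at $r=1$ and using that $v_\theta$ and $v_{\theta\theta}$ both vanish there then forces $v_r|_{\partial\mathbb{D}}\equiv 0$ as well. Since the equation is invariant under constant shifts of $v$, I may replace $v$ by $v-c$ and reduce to the case $v|_{\partial\mathbb{D}}=0$, $v_r|_{\partial\mathbb{D}}=0$.

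The final step is to upgrade this first-order boundary vanishing to $v\equiv 0$ on $\mathbb{D}$. Differentiating the reduced equation $K-1$ times in $r$ and evaluating at $r=1$, an induction on $K$ shows that every radial derivative $\partial_r^K v|_{\partial\mathbb{D}}$ vanishes: the coefficient of $\partial_r^K v|_{r=1}$ on the left-hand side evaluates to $-(2K-1)\neq 0$, while every contribution coming from the cross-product contains a factor of either $v_r|_1$, $v_\theta|_1$, or a lower radial derivative, and so vanishes by the inductive hypothesis. Combined with real-analyticity of $v$ on the open disk (by elliptic regularity with analytic coefficients), this infinite-order vanishing at $\partial\mathbb{D}$ allows one to conclude via a degenerate-elliptic unique continuation argument in the spirit of Proposition 6.12 of Kenig--Merle \cite{MR2461508}. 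This last unique-continuation step, which must accommodate the degeneracy of the principal symbol on the characteristic boundary $\partial\mathbb{D}$, is the main technical obstacle in the proof.
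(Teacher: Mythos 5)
Your reduction to the degenerate elliptic equation, the Pohozaev-type identity giving that $v$ is constant on $\partial\mathbb{D}$, and the observation that evaluating the (nondegenerate part of the) equation at $r=1$ forces $v_r|_{\partial\mathbb{D}}=0$ are all correct, and this last point is in fact a pleasant shortcut: the paper obtains $v_r=0$ on the boundary from a second integral identity rather than by evaluating the equation on the circle. The problem is the final step, which is where the actual content of the theorem lies, and there your argument has two genuine gaps. First, the induction giving $\partial_r^K v|_{r=1}=0$ for all $K$ requires differentiating the equation $K-1$ times in $r$ \emph{at} $r=1$, hence derivatives of $v$ of all orders up to the circle; the hypothesis only gives $v\in C^2$, and you cannot bootstrap higher regularity there, because the ellipticity (eigenvalues $1$ and $1-r^2$ of the principal part) degenerates exactly on $\partial\mathbb{D}$ — interior analyticity does not reach the boundary. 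Second, even granting infinite-order vanishing, the conclusion ``hence $v\equiv 0$ in $\mathbb{D}$'' is exactly a unique continuation statement for a degenerate elliptic operator from its characteristic circle, which you do not prove but only assert can be done ``in the spirit of'' Kenig--Merle's Proposition 6.12; that proposition concerns a different (scalar, higher-dimensional) degenerate operator and rests on Carleman-type estimates tailored to it, so invoking it here is not a citation but a substantial missing argument — as you yourself acknowledge by calling it the main technical obstacle.

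The paper closes precisely this gap by a device special to two dimensions that your proposal misses: the change of radial variable $\sigma=\rho/(1+\sqrt{1-\rho^2})$, for which $\sigma\partial_\sigma=\rho\sqrt{1-\rho^2}\,\partial_\rho$, turns the degenerate operator into the \emph{flat} Laplacian in $z=\sigma e^{i\theta}$, while the right-hand side becomes $d(z)\partial_z v+e(z)\partial_{\bar z}v$ with coefficients continuous up to $|z|=1$ thanks to the cancellation $\frac{1}{\sqrt{1-\rho^2}}v_\sigma=\frac{1}{1+\sqrt{1-\rho^2}}v_\rho$ (the coefficients involve the first derivatives of $v$ itself, which are continuous). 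Since $v=0$ and $v_\sigma=0$ on $\{\sigma=1\}$, extending $v$ by zero outside the disc gives a $C^1$ function on $\mathbb{C}\setminus\{0\}$ satisfying this nondegenerate first-order differential inequality, which vanishes to infinite order at any exterior point for trivial reasons; the classical Hartman--Wintner unique continuation theorem then yields $v\equiv 0$. This route needs only the first-order boundary vanishing you already established (so it is compatible with the $C^2$ hypothesis) and no new Carleman estimate. To make your proof complete you would either have to adopt such a reduction, or actually prove the degenerate unique continuation theorem (and the boundary smoothness) that you currently assume.
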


Note that by scaling and dimensional considerations, the self-similar blow-up can only be of the type (\ref{eq:sbuform}). 

The analogue of this statement for wave maps have been long known; see e.g. Chapter 7.5 of the monograph of Shatah-Struwe \cite{MR1674843}. 

Our approach to Theorem~\ref{thm:ssblowup} is inspired by that in \cite{MR1674843}. Using a Pohozhaev type identity from \cite{MR2154671}, we are also led to a unique continuation problem for a degenerate elliptic equation in self-similar coordinates. But since we are in dimension two, we may apply the uniformization theorem like in \cite{MR1674843}, and reduce matters to a unique continuation theorem of Hartman-Wintner \cite{MR2154671}, \cite{MR0058082}.

We are making the qualitative assumption $v \in C^2$ in the theorem, only to justify various integration by parts arguments in our proof.

We now turn our attention to local well-posedness of the initial value problem for the wave CMC (\ref{eq:waveCMC}). 
An easy scaling argument reveals that the wave CMC (\ref{eq:waveCMC}) is critical for initial data in $\dot{H}^1 \times L^2$. Well-posedness at such sharp regularity seems way out of reach at this point, because of the lack of sufficiently powerful Strichartz estimates in (2+1) dimensions. On the other hand, we take this chance to point out that the wave CMC (\ref{eq:waveCMC}) is locally well-posed, for initial data in $\dot{H}^{3/2} \times \dot{H}^{1/2}$; this is basically an observation that dates back to Klainerman-Machedon \cite{MR1231427} (see Remark 3 in \cite{MR1231427}).

\begin{thm} \label{thm:3halfwp}
Given any $K > 0$, there exists a small $T > 0$, and a constant $A > 0$ (both depending only on $K$) such that for any initial data $(u_0,u_1) \in \dot{H}^{3/2} \times \dot{H}^{1/2}$ with 
$$\|u_0\|_{\dot{H}^{3/2}} + \|u_1\|_{\dot{H}^{1/2}} \leq K,$$ 
the Cauchy problem
$$
(-\partial_t^2 + \Delta) u = 2 u_x \wedge u_y, \quad \left.u \right|_{t=0} = u_0, \quad \left.\partial_t u\right|_{t=0} = u_1
$$
has a unique solution $u$ on $[0,T]$ with
$$
u \in C^0_{[0,T]} \dot{H}^{3/2}, \quad \partial_t u \in C^0_{[0,T]} \dot{H}^{1/2}, \quad \|u_x \wedge u_y\|_{L^2_{[0,T]} \dot{H}^{1/2}} \leq A,
$$
in the sense that $u$ solves the following integral equation for $t \in [0,T]$:
$$
u(t) = \cos(t \sqrt{-\Delta}) u_0 + \frac{\sin(t \sqrt{-\Delta})}{\sqrt{-\Delta}} u_1 + \int_0^t \frac{\sin((t-s)\sqrt{-\Delta})}{\sqrt{-\Delta}} 2u_x \wedge u_y(s) ds.
$$
Furthermore, the map $$(u_0,u_1) \mapsto (u,\partial_t u)$$
$$\dot{H}^{3/2} \times \dot{H}^{1/2} \to C^0_{[0,T]} \dot{H}^{3/2} \times C^0_{[0,T]} \dot{H}^{1/2}$$ is continuous on the set $\{(u_0,u_1) \colon \|u_0\|_{\dot{H}^{3/2}} + \|u_1\|_{\dot{H}^{1/2}} \leq K\}$.
\end{thm}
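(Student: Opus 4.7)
The plan is to solve the integral equation stated in the theorem by a standard Picard iteration, relying crucially on the fact that the nonlinearity is a $Q_{12}$-type null form. Indeed, componentwise one computes
$$
(u_x \wedge v_y + v_x \wedge u_y)^a = \varepsilon^{abc} Q_{12}(u^b, v^c),
$$
where $\varepsilon^{abc}$ is the Levi-Civita symbol and $Q_{12}$ is the null form from (\ref{Qij}); this identity follows by relabelling the dummy indices in the second summand and using the antisymmetry of $\varepsilon^{abc}$ in $b, c$. Consequently the bilinear form underlying the nonlinearity is, componentwise, a $Q_{12}$ null form, which is exactly the structure for which the Klainerman-Machedon machinery is designed.

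I would carry out the iteration in the space
$$
X_T = \left\{ u : \|u\|_{X_T} := \|u\|_{L^\infty_{[0,T]} \dot{H}^{3/2}} + \|\partial_t u\|_{L^\infty_{[0,T]} \dot{H}^{1/2}} + \|u_x \wedge u_y\|_{L^2_{[0,T]} \dot{H}^{1/2}} < \infty \right\},
$$
possibly augmented by the restricted wave-Sobolev norm $\|u\|_{X^{3/2, 1/2+\varepsilon}_T}$ used internally in the proof of the null form estimate. The linear evolution of data of size $K$ has $X_T$-norm bounded by $CK$. For the Duhamel term, the standard wave energy estimate gives
$$
\|\mathrm{Duh}(F)\|_{L^\infty_{[0,T]} \dot{H}^{3/2}} + \|\partial_t \mathrm{Duh}(F)\|_{L^\infty_{[0,T]} \dot{H}^{1/2}} \lesssim T^{1/2} \|F\|_{L^2_{[0,T]} \dot{H}^{1/2}}.
$$
The crux is the bilinear null form estimate
$$
\|Q_{12}(u, v)\|_{L^2_{[0,T]} \dot{H}^{1/2}_x} \lesssim \|u\|_{X_T} \|v\|_{X_T},
$$
which is the content of Remark 3 in Klainerman-Machedon \cite{MR1231427} in the $(2+1)$-dimensional setting at $s = 3/2$; by polarization, it controls $\|u_x \wedge u_y\|_{L^2_{[0,T]} \dot{H}^{1/2}}$ by $\|u\|_{X_T}^2$, which in turn closes the third piece of $\|\mathrm{Duh}(2 u_x \wedge u_y)\|_{X_T}$ in a bootstrap.

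Putting these ingredients together, the iteration map $\Phi(u) := (\text{linear evolution of } (u_0,u_1)) + \mathrm{Duh}(2 u_x \wedge u_y)$ satisfies estimates of the form $\|\Phi(u)\|_{X_T} \leq CK + C T^{1/2} \|u\|_{X_T}^2$ and $\|\Phi(u) - \Phi(v)\|_{X_T} \leq C T^{1/2} (\|u\|_{X_T} + \|v\|_{X_T}) \|u - v\|_{X_T}$. Choosing $A = 2CK$ and $T = T(K)$ small enough that $C T^{1/2} A < 1/2$, the map $\Phi$ becomes a contraction on the $A$-ball of $X_T$, and the Banach fixed-point theorem produces the unique solution with the claimed bounds; continuous dependence on initial data follows from the same estimates applied to differences of two solutions. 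The only real obstacle is the null form bilinear estimate itself: its proof requires a Littlewood-Paley decomposition in both spatial frequency and modulation (distance to the characteristic cone in Fourier space), exploiting the vanishing of the symbol of $Q_{12}$ in the parallel-frequency regime to recover the half-derivative that the corresponding estimate for a generic product $u_x v_y$ would lack. This is by now standard material originating in \cite{MR1231427}, so no new ingredient is required, consistent with the remark that Theorem~\ref{thm:3halfwp} is essentially an observation of Klainerman-Machedon.
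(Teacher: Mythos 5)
Your overall scheme---Picard/Duhamel iteration closed by the Klainerman--Machedon $Q_{12}$ estimate, with the $T^{1/2}$ gain coming from H\"older in time in the Duhamel term---is the same as the paper's, and your componentwise identity $(u_x\wedge v_y+v_x\wedge u_y)^a=\varepsilon^{abc}Q_{12}(u^b,v^c)$ is correct. The genuine gap is in the pivotal bilinear estimate as you state it: $\|Q_{12}(u,v)\|_{L^2_{[0,T]}\dot H^{1/2}}\lesssim\|u\|_{X_T}\|v\|_{X_T}$ with $\|\cdot\|_{X_T}$ built only from $L^\infty_t\dot H^{3/2}\times L^\infty_t\dot H^{1/2}$ (plus the quadratic quantity) is false. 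An $L^\infty_t$ energy bound carries no space-time dispersive information, and the inequality already fails for time-independent functions: take $u=(0,\phi,0)$, $v=(0,0,\psi)$ with $\hat\phi,\hat\psi$ bumps at transverse frequencies of size $\lambda$, each normalized in $\dot H^{3/2}$; then $u_x\wedge u_y=v_x\wedge v_y=0$, so $\|u\|_{X_T}\|v\|_{X_T}\lesssim 1$, while $\|Q_{12}(\phi,\psi)\|_{L^2_{[0,T]}\dot H^{1/2}}\sim T^{1/2}\lambda^{1/2}\to\infty$. The null symbol gives no gain at a fixed time; the gain comes only through the wave equation. So the estimate must be formulated either (i) as the paper does in (\ref{eq:wedgeest}), for solutions of $\square u=F$, $\square v=G$, with right-hand side $(\|u_0\|_{\dot H^{3/2}}+\|u_1\|_{\dot H^{1/2}}+\|F\|_{L^1_{[0,T]}\dot H^{1/2}})(\|v_0\|_{\dot H^{3/2}}+\|v_1\|_{\dot H^{1/2}}+\|G\|_{L^1_{[0,T]}\dot H^{1/2}})$, obtained from the free-wave estimate (\ref{eq:derkeyQ12++}) by writing $u,v$ as Duhamel superpositions of half-waves; or (ii) genuinely in $X^{s,b}$ norms with $b>1/2$, in which case the $X^{s,b}$ norm cannot remain a parenthetical ``possibly augmented'' afterthought: it must be the norm in which you contract, and you then owe the corresponding linear and inhomogeneous estimates in that norm. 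Without one of these two reformulations the contraction does not close.

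Two smaller points. The quantity $\|u_x\wedge u_y\|_{L^2_{[0,T]}\dot H^{1/2}}$ is quadratic in $u$, so your $X_T$ is not a normed space and ``contraction on the $A$-ball of $X_T$'' is not literally meaningful; either drop that term from the norm and propagate it as an a priori bound along the iterates (this is exactly what the paper's inductive bounds (\ref{eq:induct1})--(\ref{eq:induct3}) accomplish), or contract in the $X^{s,b}$ norm. In the Lipschitz/difference step, note that $u_x\wedge w_y$ alone (with $w=u-v$) is not componentwise a null form; you should write $u_x\wedge u_y-v_x\wedge v_y=(u_x\wedge w_y+w_x\wedge u_y)-w_x\wedge w_y$ so that your symmetrized identity applies to each bracket. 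Finally, the paper does not invoke Littlewood--Paley or modulation decompositions at all: it proves the free-wave estimate (\ref{eq:derkeyQ12++}) by an explicit Fourier computation (polar coordinates in $\xi'$, the change of variables $\rho\mapsto\tau=|\xi-\rho\omega|+\rho$, and the pointwise bound (\ref{eq:keyQ12++}) of the null symbol against the cone weight), which is more elementary than the route you sketch; if you keep your route, give a precise citation for the exact bilinear $X^{s,b}$ statement you use.
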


Here $C^0_{[0,T]} \dot{H}^s$ is the set of maps $u \colon [0,T] \times \mathbb{R}^2 \to \mathbb{R}^3$ with
$$
\|u\|_{C^0_{[0,T]} \dot{H}^s} := \sup_{t \in [0,T]} \left( \int_{\mathbb{R}^2} |(-\Delta_{x,y})^{s/2} u(t,x,y)|^2 dx dy \right)^{1/2} < \infty,
$$
and similarly one defines $L^p_{[0,T]} \dot{H}^s$. We provide a proof of Theorem~\ref{thm:3halfwp}  in Section~\ref{sect:3halfwp}, for the convenience of the reader.

\vspace{0.05 in}

\noindent{\textbf{Ackonwledgements}}. S.C. and P.-L.Y. were supported by NSF grant DMS-1201474. P.-L.Y. was also supported by a Titchmarsh Fellowship at the University of Oxford, a junior research fellowship at St. Hilda's College, and a direct allocation grant from the Chinese University of Hong Kong. S.C. wishes to thank Carlos Kenig for several enlightening conversations related to the contents of this paper. 

\section{Non-existence of self-similar blow-ups} \label{sect:ssblowup}

\begin{proof}[Proof of Theorem~\ref{thm:ssblowup}]
By finite speed of propagation, we may assume that $v$ is defined only on $\overline{\mathbb{D}}$, and consider the solution $u$ only in the light cone $$\Gamma:=\{(x,y,t)\colon \sqrt{x^2+y^2} \leq t\}.$$ We will also forget about the values of $v$ outside $\overline{\mathbb{D}}$.

Now we introduce self-similar variables
$$
\tau = \sqrt{t^2 - x^2 - y^2}, \quad \rho = \frac{\sqrt{x^2 + y^2}}{t}, \quad \rho e^{i\theta} = \frac{x + i y}{t}
$$
which is a re-parametrization of $\Gamma$.
($\tau$ is well-defined since we are now in the light cone $\Gamma$.) We also write $v = v(\rho,\theta)$ for $\rho \in [0,1]$, $\theta \in [0,2\pi]$. Then the Minkowski metric 
$$ds^2 = -dt^2 + dx^2 + dy^2$$ on $\Gamma$ becomes
$$
ds^2 = -d\tau^2 + \frac{\tau^2 d\rho^2}{(1-\rho^2)^2} + \frac{\tau^2 \rho^2 d\theta^2}{1-\rho^2}
$$
in the new $(\tau,\rho,\theta)$ coordinate system, i.e. 
\begin{equation} \label{metric}
g_{\tau \tau} = -1, \quad g_{\rho\rho} = \frac{\tau^2}{(1-\rho^2)^2} , \quad  \text{and} \quad g_{\theta\theta} = \frac{\tau^2 \rho^2}{1-\rho^2}.
\end{equation} 
In fact,
$$
\tau^2 = t^2 - (x^2 + y^2) = t^2 - \rho^2 t^2 = (1-\rho^2)t^2,
$$
so
$$
t = \frac{\tau}{\sqrt{1-\rho^2}}.
$$
Also,  $x = \rho t \cos \theta$, $y =  \rho t \sin \theta$, so
$$ x= \frac{\tau \rho}{\sqrt{1-\rho^2}} \cos \theta, \quad y  = \frac{\tau \rho}{\sqrt{1-\rho^2}} \sin \theta.
$$
It follows that
\begin{align*}
dt &= \frac{1}{\sqrt{1-\rho^2}} d\tau + \frac{\tau \rho}{(1-\rho^2)^{3/2}} d\rho \\
dx &= \frac{\rho}{\sqrt{1-\rho^2}} \cos \theta d\tau + \frac{\tau}{(1-\rho^2)^{3/2}} \cos \theta d\rho - \frac{\tau \rho}{\sqrt{1-\rho^2}} \sin \theta d\theta \\
dy &= \frac{\rho}{\sqrt{1-\rho^2}} \sin \theta d\tau + \frac{\tau}{(1-\rho^2)^{3/2}} \sin \theta d\rho + \frac{\tau \rho}{\sqrt{1-\rho^2}} \cos \theta d\theta,
\end{align*}
and (\ref{metric}) follows.
From (\ref{metric}), we have 
$$\sqrt{|\det(g_{ij})|} = \frac{\tau^2 \rho}{(1-\rho^2)^{3/2}},$$ 
$$g^{\tau\tau} = -1, \quad g^{\rho\rho} = \frac{(1-\rho^2)^2}{\tau^2} , \quad  \text{and} \quad g^{\theta\theta} = \frac{1-\rho^2}{\tau^2 \rho^2}.$$ It follows that the wave operator $-\partial_t^2 + \Delta_{x,y}$ becomes
\begin{align*}
& \frac{(1-\rho^2)^{3/2}}{\tau^2 \rho} \left[
\partial_{\tau} \left( -\frac{\tau^2 \rho}{(1-\rho^2)^{3/2}} \partial_{\tau} \cdot \right)  + 
\partial_{\rho} \left( \frac{\tau^2 \rho}{(1-\rho^2)^{3/2}} \frac{(1-\rho^2)^2}{\tau^2} \partial_{\rho} \cdot \right) \right. \\
& \qquad \qquad \qquad\left. + \partial_{\theta} \left( \frac{\tau^2 \rho}{(1-\rho^2)^{3/2}} \frac{1-\rho^2}{\tau^2 \rho^2} \partial_{\theta} \cdot \right) \right] \\
= & -\frac{(1-\rho^2)^{3/2}}{\tau^2 \rho} \left[
\partial_{\tau} \left( \frac{\tau^2 \rho}{(1-\rho^2)^{3/2}} \partial_{\tau} \cdot \right)  - 
\partial_{\rho} \left( \rho \sqrt{1-\rho^2} \partial_{\rho} \cdot \right)  - 
\left( \frac{1}{\rho \sqrt{1-\rho^2}} \partial^2_{\theta} \cdot \right) \right] 
\end{align*}
Furthermore,
\begin{align*}
\partial_x 
&= \frac{\partial \tau}{\partial x} \partial_{\tau} +  \frac{\partial \rho}{\partial x} \partial_{\rho} +  \frac{\partial \theta}{\partial x} \partial_{\theta} \\
&= -\frac{x}{\sqrt{t^2-x^2-y^2}} \partial_{\tau} +  \frac{x}{t \sqrt{x^2 + y^2}} \partial_{\rho} - \frac{y}{x^2 + y^2} \partial_{\theta} \\
&= -\frac{\rho \cos \theta}{\sqrt{1-\rho^2}} \partial_{\tau} +  \frac{\sqrt{1-\rho^2} \cos \theta}{\tau} \partial_{\rho} - \frac{\sqrt{1-\rho^2} \sin \theta}{\tau \rho} \partial_{\theta}
\end{align*}
and similarly
\begin{align*}
\partial_y &= \frac{\partial \tau}{\partial y} \partial_{\tau} +  \frac{\partial \rho}{\partial y} \partial_{\rho} +  \frac{\partial \theta}{\partial y} \partial_{\theta} \\
&= -\frac{y}{\sqrt{t^2-x^2-y^2}} \partial_{\tau} +  \frac{y}{t \sqrt{x^2 + y^2}} \partial_{\rho} + \frac{x}{x^2 + y^2} \partial_{\theta} \\
&= -\frac{\rho \sin \theta}{\sqrt{1-\rho^2}} \partial_{\tau} +  \frac{\sqrt{1-\rho^2} \sin \theta}{\tau} \partial_{\rho} + \frac{\sqrt{1-\rho^2} \cos \theta}{\tau \rho} \partial_{\theta}.
\end{align*}
Applying these to $v(\rho,\theta)$, and noting that it is independent of $\tau$, we see that the wave CMC for $u$ becomes
$$
\frac{(1-\rho^2)^{3/2}}{\tau^2 \rho} \left[ 
\partial_{\rho} \left( \rho \sqrt{1-\rho^2} \partial_{\rho} v \right)  +
\frac{1}{\rho \sqrt{1-\rho^2}} \partial^2_{\theta} v  \right] 
= 2 \frac{1-\rho^2}{\tau^2 \rho} v_{\rho} \wedge v_{\theta},
$$
i.e.
\begin{equation} \label{vselfsimilarcoord}
\rho \sqrt{1-\rho^2} \partial_{\rho}  \left( \rho \sqrt{1-\rho^2} \partial_{\rho} v \right) +  \partial^2_{\theta} v 
= 2\rho v_{\rho} \wedge v_{\theta}.
\end{equation}
Now take the dot product of both sides with $v_{\rho}$. Then the right hand side vanishes, and we get
$$
\frac{1}{2} \partial_{\rho}  \left| \rho \sqrt{1-\rho^2} v_{\rho} \right|^2 + v_{\rho}  \cdot \partial^2_{\theta} v = 0.
$$
Integrating this in $\theta$, and integrating by parts in the last term, we get
$$
\frac{d}{d\rho} \int_0^{2\pi} \left[ \rho^2 (1-\rho^2) |v_{\rho}|^2 - |v_{\theta} |^2 \right] d\theta = 0,
$$
i.e. 
\begin{equation} \label{vthetaconst}
\int_0^{2\pi} \left[ \rho^2 (1-\rho^2)  |v_{\rho}|^2 - |v_{\theta} |^2  \right] d\theta
\end{equation}
is a constant independent of $\rho$. Letting $\rho \to 0$, we see that this constant is zero (note that $v_{\theta} = O(\rho)$ as $\rho \to 0$, if $v$ is differentiable at $0$). So when $\rho = 1$, the integral (\ref{vthetaconst}) is equal to 0. It follows that
$$
\int_0^{2\pi} |v_{\theta}|^2 d\theta = 0 \qquad \text{when $\rho = 1$},
$$
i.e. $v$ is a constant on $\partial \mathbb{D}$. Note that the wave CMC has no zeroth order term. Hence we may subtract a constant from $v$, to make 
$$
v = 0 \quad \text{on $\partial \mathbb{D}$},
$$
and we will do so from now on.

Now introduce a new variable $$\sigma = \exp \left( -\int_{\rho}^1 \frac{ds}{s \sqrt{1-s^2}} \right) = \frac{\rho}{1+\sqrt{1-\rho^2}}$$ such that
$$
\sigma \frac{\partial}{\partial \sigma} = \rho \sqrt{1-\rho^2} \frac{\partial}{\partial \rho}.
$$
(Note that as $\rho$ varies between $[0,1]$, $\sigma$ also varies between $[0,1]$.) Then equation (\ref{vselfsimilarcoord}) for $v$ becomes
$$
\sigma \partial_{\sigma} (\sigma \partial_{\sigma} v) + \partial^2_{\theta} v = 2 \frac{\sigma}{\sqrt{1-\rho^2}} v_{\sigma} \wedge v_{\theta},
$$
i.e.
\begin{equation} \label{eq:vsigma}
v_{\sigma \sigma} + \frac{1}{\sigma} v_{\sigma} + \frac{1}{\sigma^2} v_{\theta \theta} =  2 \frac{1}{\sigma \sqrt{1-\rho^2}} v_{\sigma} \wedge v_{\theta}.
\end{equation}
Let $z = \sigma e^{i\theta}$. Then on the left hand side we have then the flat Laplacian on the $z$-plane. On the right hand side we have something normal to the surface parametrized by $v$. So we can apply the strategy of Chanillo-Malchiodi \cite{MR2154671}, Lemma 3.1. More precisely, we take the dot product of both sides of the equation with $\sigma v_{\sigma}$. Then
$$
\sigma v_{\sigma} \cdot [v_{\sigma \sigma} + \frac{1}{\sigma} v_{\sigma} + \frac{1}{\sigma^2} v_{\theta \theta}] =  0
$$
on the unit disc $\mathbb{D}$ in the $z$-plane. We integrate over $\mathbb{D}$ using polar coordinates: 
\begin{align*}
0 
&= \int_0^1 \int_0^{2\pi} \sigma v_{\sigma} \cdot [v_{\sigma \sigma} + \frac{1}{\sigma} v_{\sigma} + \frac{1}{\sigma^2} v_{\theta \theta}] \sigma  d\theta d\sigma\\
&= \int_0^1 \int_0^{2\pi} \frac{\partial}{\partial \sigma} \left(\frac{1}{2} \sigma^2 |v_{\sigma}|^2 - \frac{1}{2} |v_{\theta}|^2 \right) + \frac{\partial}{\partial \theta} (v_{\sigma} \cdot v_{\theta})  d\theta d\sigma \\
&=  \int_0^{2\pi} \left[ \frac{1}{2} \sigma^2 |v_{\sigma}|^2 - \frac{1}{2} |v_{\theta}|^2 \right]_{\sigma = 0}^{\sigma = 1} d\theta \\
&=  \int_0^{2\pi} \left[ \frac{1}{2} |v_{\sigma}|^2 \right]_{\sigma = 1} d\theta
\end{align*}
(the last equality following from $v_{\theta} = 0$ when $\sigma = 1$ (note $\sigma = 1$ if and only if $\rho = 1$), and that $$v_{\theta} = \rho (v_y \cos \theta - v_x \sin \theta) = O(\sigma |\nabla_{x,y} v|) \to 0$$ as $\sigma \to 0$). Hence
$$
v_{\sigma} = 0 \quad \text{on $\{\sigma = 1\}$}.
$$
Now we extend $v$ so that $v = 0$ when $\sigma > 1$ . Then from the above, $v(z) \in C^1(\mathbb{C} \setminus \{0\})$. We can then apply the unique continuation technique of Hartman-Wintner. More precisely, from Theorem 2 of \cite{MR0058082}, we have:
\begin{thm}[Hartman-Wintner \cite{MR0058082}] \label{thm:HW}
Suppose $r \in \mathbb{N}$, $U$ is an open set in $\mathbb{C}$, and $v \in C^1(U,\mathbb{R}^r)$, and there exists  continuous (matrix-valued) functions $d, e, f$ on $U$ such that 
\begin{equation} \label{eq:HWid}
\int_{\partial \Omega} g(z) \frac{\partial v}{\partial z} dz = \int_{\Omega} g(z) \left[d(z) \frac{\partial v}{\partial z} + e(z) \frac{\partial v}{\partial \overline{z}} + f(z) v(z) \right] dz \wedge d\overline{z}
\end{equation}
for all piecewise smooth relatively compact domains $\Omega$ of $U$ and all holomorphic functions $g$ on $\overline{\Omega}$. If there exists a point $z_0 \in U$ such that 
\begin{equation} \label{eq:HWorder}
\lim_{z \to z_0} \frac{v(z)}{(z-z_0)^n} = 0 \quad \text{for all $n \in \mathbb{N}$,}
\end{equation}
then $$v \equiv 0 \quad \text{on $U$}.$$
\end{thm}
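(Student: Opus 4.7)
The strategy is to convert the integral identity (\ref{eq:HWid}) into a weak Beltrami-type first-order system for $W := \partial_z v$, apply a local Bers--Vekua similarity principle at $z_0$, and then propagate the ensuing local vanishing of $v$ by an open--closed argument.

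First, I apply Stokes' theorem to the left-hand side of (\ref{eq:HWid}): since $g$ is holomorphic, $d(g\, \partial_z v\, dz) = -\partial_{\bar z}(g\, \partial_z v)\, dz \wedge d\bar z = -g\, \partial_{\bar z}\partial_z v\, dz\wedge d\bar z$ in the distributional sense. Comparing with (\ref{eq:HWid}) and letting $g$ range over holomorphic polynomials (dense in $C(\overline\Omega)$ on small discs by Runge), I extract the weak second-order equation
$$\partial_{\bar z}\partial_z v + d\,\partial_z v + e\,\partial_{\bar z} v + f\, v = 0 \quad \text{on } U.$$
Since $v$ is $\mathbb{R}^r$-valued, $\partial_{\bar z} v = \overline{\partial_z v}$; writing $W = \partial_z v$, this becomes the first-order system
$$\partial_{\bar z} W + d\, W + e\, \bar W = -f\, v.$$
Standard interior elliptic regularity upgrades $v$ to $C^{1,\alpha}_{\mathrm{loc}}$ and justifies the subsequent manipulations.

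Second, in a small disc $D_r(z_0)$ I apply the Vekua similarity representation to write $W(z) = e^{s(z)} H(z) + R(z)$, where $s$ is bounded (constructed via a Cauchy-type integral to absorb the contribution of $dW + e\bar W$), $H$ is holomorphic on $D_r(z_0)$, and $R$ is a Cauchy-type integral correction encoding the inhomogeneity $-fv$; schematically,
$$R(z) = -\frac{1}{\pi}\int_{D_r} \frac{e^{s(z)-s(\zeta)} f(\zeta) v(\zeta)}{\zeta - z}\, dA(\zeta),$$
possibly adjusted by a Neumann series. The hypothesis (\ref{eq:HWorder}) combined with a direct estimate of this Cauchy integral forces $R$ to vanish to infinite order at $z_0$, and consequently so does $H$. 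Being holomorphic, $H$ then vanishes identically on $D_r(z_0)$; iterating (or choosing $r$ small enough that the integral operator is a contraction on $L^p$) gives $W \equiv 0$ in a neighborhood of $z_0$. Combined with $\bar W = \partial_{\bar z} v$, this shows $v$ is locally constant near $z_0$, and $v(z_0) = 0$ from (\ref{eq:HWorder}) yields $v \equiv 0$ in a neighborhood of $z_0$.

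Finally, I globalize by an open--closed argument: the set $N = \{z \in U : v \equiv 0 \text{ on a neighborhood of } z\}$ is open by definition, and closed in $U$ because any limit point $z_\infty$ of $N$ inside $U$ has $v$ vanishing identically on open sets accumulating at $z_\infty$, hence $v$ vanishes to infinite order at $z_\infty$, and the preceding step applies. Consequently $N$ equals the connected component of $U$ containing $z_0$, and (assuming $U$ connected) $v \equiv 0$ on $U$. The main obstacle is the similarity-principle step: the clean version of Bers--Vekua handles only the homogeneous Beltrami equation, so the $-fv$ inhomogeneity must be controlled. The saving grace is that $v$ itself vanishes to infinite order at $z_0$, so $fv$ is a forcing negligible at every polynomial order, which transfers to $R$ and forces $H$ to bear sole responsibility for the asymptotic behavior of $W$ near $z_0$; making this rigorous requires care with the Neumann-series convergence on small discs and with the regularity needed to justify pointwise infinite-order vanishing of $W$.
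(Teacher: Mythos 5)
You should first note that the paper does not prove Theorem~\ref{thm:HW} at all: it is quoted verbatim from Hartman--Wintner (Theorem 2 of \cite{MR0058082}), whose proof runs through Cauchy-integral representations yielding an asymptotic expansion $\partial_z v = a(z-z_0)^n + o(|z-z_0|^n)$ (or identical vanishing), an argument that works for $\mathbb{R}^r$-valued $v$. Measured against that, your argument has a genuine gap at its central step. The Bers--Vekua similarity representation $W = e^{s}H$ is a \emph{scalar} device: one absorbs $dW + e\bar W$ into the exponential by writing $e\bar W = e(\bar W/W)W$ and taking $s$ to be a Cauchy transform of the resulting $L^\infty$ coefficient, and this division by $W$ is meaningless when $W=\partial_z v$ is $\mathbb{C}^r$-valued and $d,e$ are matrices (a similarity principle for first-order elliptic systems is a substantially harder matter and cannot be invoked as "the Vekua representation"). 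The Neumann-series variant you hint at (treating all of $dW+e\bar W$ as inhomogeneity and inverting $I+K$ on a small disc) does apply to systems, but then the representation is $W=(I+K)^{-1}\bigl(H - T(fv)\bigr)$ and the flatness bookkeeping you rely on no longer goes through as stated.

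More seriously, the assertion that the correction $R$ "vanishes to infinite order at $z_0$" is false: the Cauchy transform of a density that is flat at a single point is not flat there --- already $R(z_0) = -\tfrac{1}{\pi}\int f(\zeta)v(\zeta)(\zeta-z_0)^{-1}\,dA(\zeta)$ is generically nonzero; flatness of $fv$ at $z_0$ only guarantees that $R$ has a full (generally nonvanishing) Taylor jet there. And even granting flatness of $R$, to conclude that $H$ is flat you need $W=\partial_z v$ itself to vanish to infinite order at $z_0$, which is \emph{not} a hypothesis (only $v$ is assumed flat) and is never established; it requires a separate argument, e.g.\ interior elliptic estimates on dyadic annuli around $z_0$ applied to $\Delta v = O(|\nabla v|+|v|)$. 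The standard repair is: (i) show $\nabla v$ inherits infinite-order vanishing from $v$ via such annulus estimates; (ii) observe that $V=(v,\partial_z v)$ satisfies the first-order differential inequality $|\partial_{\bar z}V|\le C|V|$ near $z_0$ (using $\partial_{\bar z}v=\overline{\partial_z v}$); (iii) apply a strong unique continuation result for this \emph{vector-valued} inequality, e.g.\ via the Carleman estimate for $\partial_{\bar z}$ with weights $|z-z_0|^{-\tau}$, which is insensitive to the number of components, or via Hartman--Wintner's expansion argument --- precisely the scalar $e^{s}H$ device is the one tool that does not survive the passage to systems. Two smaller points: holomorphic polynomials are not dense in $C(\overline\Omega)$ (uniform limits of holomorphic functions are holomorphic), but this is harmless since taking $g\equiv 1$ and varying $\Omega$ already yields the distributional equation by a Morera/Pompeiu argument; and your open--closed globalization, together with the (correct) remark that connectedness of $U$ is needed, is fine.
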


We verify that the conditions of the above theorem is met, when $r = 3$ and $U = \mathbb{C} \setminus \{0\}$: First, since $v \in C^1(\mathbb{C}\setminus\{0\})$, and is supported in $\overline{\mathbb{D}}$, we have, for any piecewise smooth relatively compact domain $\Omega \subset \mathbb{C} \setminus \{0\}$, that
\begin{align*}
\int_{\partial \Omega} g(z) \frac{\partial v}{\partial z} dz 
=& \int_{\partial (\Omega \cap \mathbb{D})} g(z) \frac{\partial v}{\partial z} dz \\
=& \int_{\Omega \cap \mathbb{D}} \frac{\partial}{\partial \overline{z}} \left( g(z) \frac{\partial v}{\partial z} \right) d\overline{z} \wedge dz \\
=& \int_{\Omega \cap \mathbb{D}}  g(z) \frac{\partial^2 v}{\partial z \partial \overline{z}} d\overline{z} \wedge dz.
\end{align*}
But by (\ref{eq:vsigma}), on $\mathbb{D}$ we have
$$
\frac{1}{4} \frac{\partial^2 v}{\partial z \partial \overline{z}} = v_{\sigma \sigma} + \frac{1}{\sigma} v_{\sigma} + \frac{1}{\sigma^2} v_{\theta \theta} = 
 2 \frac{1}{\sqrt{1-\rho^2}} v_{\sigma} \wedge \frac{1}{\sigma} v_{\theta},
$$
and
$$
\frac{1}{\sqrt{1-\rho^2}} v_{\sigma} = \frac{1}{\sqrt{1-\rho^2}} \frac{\rho \sqrt{1-\rho^2}}{\sigma} v_{\rho} = \frac{\rho}{\sigma} v_{\rho} = \frac{1}{1+\sqrt{1-\rho^2}} v_{\rho}
$$
is continuous up to $\{\sigma = 1\}$. 
Also, $\frac{1}{\sigma} v_{\theta}$ is a linear combination of $\frac{\partial v}{\partial z}$ and $\frac{\partial v}{\partial \overline{z}}$ on $\mathbb{C} \setminus \{0\}$. Hence one can find continuous (matrix-valued) functions $d$ and $e$ on $\overline{\mathbb{D}} \setminus \{0\}$, such that
$$
\frac{\partial^2 v}{\partial z \partial \overline{z}} 
= d(z) \frac{\partial v}{\partial z} + e(z) \frac{\partial v}{\partial \overline{z}}
$$
on $\mathbb{D} \setminus \{0\}$. Extending $d$ and $e$ continuously to $\mathbb{C}$, and using that $v$ vanishes outside $\overline{\mathbb{D}}$, we see that (\ref{eq:HWid}) is satisfied with $f = 0$. Also, (\ref{eq:HWorder}) is satisfied at any $z_0 \in \mathbb{C} \setminus \overline{\mathbb{D}}$. Hence Theorem~\ref{thm:HW} implies that $v \equiv 0$ on $\mathbb{C} \setminus \{0\}$, which also implies $v(0) = 0$ by continuity. In particular, we have $v(z) = 0$ for all $|z| < 1$, i.e. $v(\rho,\theta) = 0$ whenever $\rho < 1$, as desired.

\end{proof}

\section{Local well-posedness in $\dot{H}^{3/2}$} \label{sect:3halfwp}

\begin{proof}[Proof of Theorem~\ref{thm:3halfwp}]
Let $\square = (-\partial_t^2 + \Delta)$ be the D'Alembertian on $\mathbb{R}^{1+2}$. Recall the null form $Q_{12}$ from (\ref{Qij}). Note that the wave CMC (\ref{eq:waveCMC}) is a system of equations, that can be written in the components $(u_1,u_2,u_3)$ of $u$ as
\begin{align*}
\square u_1 &= 2 Q_{12}(u_2,u_3) \\
\square u_2 &= 2 Q_{12}(u_3,u_1) \\
\square u_3 &= 2 Q_{12}(u_1,u_2).
\end{align*}
Also, as was observed in Klainerman-Machedon \cite{MR1231427}, we have the following estimates for the null form $Q_{12}$ on $\mathbb{R}^{1+2}$: if $f, g \in \dot{H}^{1/2}(\mathbb{R}^2)$, and
$$
\phi_{\pm} := \frac{e^{\pm i t \sqrt{-\Delta}}}{\sqrt{-\Delta}} f, \quad \psi_{\pm} := \frac{e^{\pm i t \sqrt{-\Delta}}}{\sqrt{-\Delta}} g,
$$
then
\begin{equation} \label{eq:derkeyQ12++}
\|(-\Delta)^{1/4} Q_{12} (\phi_+,\psi_+)\|_{L^2(\mathbb{R}^{1+2})} \leq C \|f\|_{\dot{H}^{1/2}} \|g\|_{\dot{H}^{1/2}}.
\end{equation}
(We briefly recall the proof of this at the end of the section, for the convenience of the reader.) The same continues to hold, if the signs $(+,+)$ on the left hand side are replaced by any of the choices $(-,-)$, $(+,-)$ and $(-,-)$. Thus if $u, v \colon [0,T] \times \mathbb{R}^2 \to \mathbb{R}^3$, with
$$
\square u = F, \quad \left. u \right|_{t=0} = u_0, \quad \left. \partial_t u \right|_{t=0} = u_1,
$$
$$
\square v = G, \quad \left. v \right|_{t=0} = v_0, \quad \left. \partial_t v \right|_{t=0} = v_1,
$$
then
\begin{align}
\|u_x \wedge v_y \|_{L^2_{[0,T]} \dot{H}^{1/2}}  \label{eq:wedgeest} 
\leq C & (\|u_0\|_{\dot{H}^{3/2}} + \|u_1\|_{\dot{H}^{1/2}} + \|F\|_{L^1_{[0,T]} \dot{H}^{1/2}}) \\
&  \cdot (\|v_0\|_{\dot{H}^{3/2}} + \|v_1\|_{\dot{H}^{1/2}} + \|G\|_{L^1_{[0,T]} \dot{H}^{1/2}}). \notag
\end{align}
Also, the standard energy estimate shows that
\begin{equation} \label{eq:energy32}
\|u\|_{C^0_{[0,T]} \dot{H}^{3/2}} + \|\partial_t u\|_{C^0_{[0,T]} \dot{H}^{1/2}} \leq 2( \|u_0\|_{\dot{H}^{3/2}} + \|u_1\|_{\dot{H}^{1/2}} + \|F\|_{L^1_{[0,T]} \dot{H}^{1/2}}).
\end{equation}

Now to prove the theorem, let $K$ be given, and set
$$A = \max \{2 K, 4 C K^2 \}$$ where from now on $C$ is the constant in (\ref{eq:wedgeest}).
Let $T > 0$ be sufficiently small, so that
\begin{equation} \label{eq:Tchoice1}
4T^{1/2} \leq \frac{1}{2},
\end{equation} 
\begin{equation} \label{eq:Tchoice2}
2T^{1/2}A \leq K,
\end{equation} 
and
\begin{equation} \label{eq:Tchoice3} 
2C T^{1/2} (K + 2T^{1/2} A) \leq \frac{1}{4}.
\end{equation}
To prove existence, we fix initial data $(u_0,u_1) \in \dot{H}^{3/2} \times \dot{H}^1$ with 
$$
\|u_0\|_{\dot{H}^{3/2} } + \|u_1\|_{\dot{H}^{1/2}} \leq K.
$$
Let
$$
u^{(0)} := 0,
$$
and for $k \geq 0$, let $u^{(k+1)}$ solve
$$
\square u^{(k+1)} = 2 u^{(k)}_x \wedge u^{(k)}_y, \quad \left. u^{(k+1)}\right|_{t = 0} = u_0, \quad \left. \partial_t u^{(k+1)} \right|_{t=0} = u_1.
$$
We will prove, by induction, that for all $k \geq 0$, 
\begin{equation} \label{eq:induct1}
\|u^{(k+1)} - u^{(k)}\|_{C^0_{[0,T]} \dot{H}^{3/2}} + \|\partial_t u^{(k+1)} - \partial_t u^{(k)}\|_{C^0_{[0,T]} \dot{H}^{1/2}} \leq \frac{A}{2^k}
\end{equation}
\begin{equation} \label{eq:induct2}
\|u^{(k+1)}_x \wedge u^{(k+1)}_y \|_{L^2_{[0,T]} \dot{H}^{1/2}} \leq A
\end{equation}
\begin{equation} \label{eq:induct3}
\|u^{(k+1)}_x \wedge u^{(k+1)}_y -u^{(k)}_x \wedge u^{(k)}_y \|_{L^2_{[0,T]} \dot{H}^{1/2}} \leq \frac{A}{2^k}.
\end{equation}
In fact, first consider the case $k=0$. 
Then from (\ref{eq:energy32}), we have
$$
\|u^{(1)}\|_{C^0_{[0,T]} \dot{H}^{3/2}} + \|\partial_t u^{(1)}\|_{C^0_{[0,T]} \dot{H}^{1/2}} \leq 2 (\|u_0\|_{\dot{H}^{3/2}} + \|u_1\|_{\dot{H}^1}) \leq 2K;
$$
from (\ref{eq:wedgeest}), we have
$$
\|u^{(1)}_x \wedge u^{(1)}_y\|_{L^2_{[0,T]} \dot{H}^{1/2}} \leq C (\|u_0\|_{\dot{H}^{3/2}} + \|u_1\|_{\dot{H}^{1/2}})^2 \leq CK^2.
$$
By our choice of $A$, this proves (\ref{eq:induct1}), (\ref{eq:induct2}) and (\ref{eq:induct3}) when $k = 0$.

Now suppose $k \geq 1$.  Then by (\ref{eq:energy32}),
\begin{align*}
& \|u^{(k+1)} - u^{(k)}\|_{C^0_{[0,T]} \dot{H}^{3/2}} + \|\partial_t u^{(k+1)} - \partial_t u^{(k)}\|_{C^0_{[0,T]} \dot{H}^{1/2}} \\
\leq & 2 \|2 u^{(k)}_x \wedge u^{(k)}_y - 2 u^{(k-1)}_x \wedge u^{(k-1)}_y\|_{L^1_{[0,T]} \dot{H}^{1/2}} \\
\leq & 4 T^{1/2} \frac{A}{2^{k-1}} \\
\leq & \frac{A}{2^k}.
\end{align*}
(The second-to-last inequality follows from (\ref{eq:induct3}) for $k-1$ in place of $k$, and the last inequality from (\ref{eq:Tchoice1}).) Also,
by (\ref{eq:wedgeest}),
\begin{align*}
& \|u^{(k+1)}_x \wedge u^{(k+1)}_y \|_{L^2_{[0,T]} \dot{H}^{1/2}} \\
\leq & C (\|u_0\|_{\dot{H}^{3/2}} + \|u_1\|_{\dot{H}^{1/2}} + \|2 u^{(k)}_x \wedge u^{(k)}_y \|_{L^1_{[0,T]} \dot{H}^{1/2}})^2 \\
\leq & C (K + 2 T^{1/2} \|u^{(k)}_x \wedge u^{(k)}_y \|_{L^2_{[0,T]} \dot{H}^{1/2}})^2 \\
\leq & C (K + 2 T^{1/2} A)^2 \leq  C(2K)^2 \leq A,
\end{align*}
(The second inequality follows from (\ref{eq:induct2}) with $k$ replaced by $k-1$, and the third inequality from (\ref{eq:Tchoice2}).) Finally, note that
$$
u^{(k+1)}_x \wedge u^{(k+1)}_y - u^{(k)}_x \wedge u^{(k)}_y 
= u^{(k+1)}_x \wedge (u^{(k+1)}-u^{(k)})_y  + (u^{(k+1)}-u^{(k)})_x \wedge u^{(k)}_y 
$$
But by (\ref{eq:wedgeest}),
\begin{align*}
&\|u^{(k+1)}_x \wedge (u^{(k+1)}-u^{(k)})_y \|_{L^2_{[0,T]} \dot{H}^{1/2}} \\
\leq & C (\|u_0\|_{\dot{H}^{3/2}} + \|u_1\|_{\dot{H}^{1/2}} + \|2 u^{(k)}_x \wedge u^{(k)}_y\|_{L^1_{[0,T]} \dot{H}^{1/2}}) \|2 u^{(k)}_x \wedge u^{(k)}_y - 2 u^{(k-1)}_x \wedge u^{(k-1)}_y \|_{L^1_{[0,T]} \dot{H}^{1/2}} \\
\leq & 2 C T^{1/2} (K + 2T^{1/2} A) \cdot \frac{A}{2^{k-1}} \\
\leq & \frac{A}{2^{k+1}}.
\end{align*}
(The second-to-last inequality follows from (\ref{eq:induct2}) and (\ref{eq:induct3}) with $k$ replaced by $k-1$, and the last inequality from (\ref{eq:Tchoice3}).) Similarly, one can show
\begin{align*}
\|(u^{(k+1)}-u^{(k)})_x \wedge u^{(k)}_y \|_{L^2_{[0,T]} \dot{H}^{1/2}} 
\leq & \frac{A}{2^{k+1}}.
\end{align*}
Together they prove (\ref{eq:induct3}). This completes our proof of (\ref{eq:induct1}), (\ref{eq:induct2}) and (\ref{eq:induct3}).

Let $X$ be the Banach space $$\{u \colon u \in C^0_{[0,T]} \dot{H}^{3/2}, \partial_t u \in C^0_{[0,T]} \dot{H}^{1/2}\}$$ with the natural norm. Then by (\ref{eq:induct1}), $u^{(k)}$ is Cauchy in $X$. We write $u$ for the limit of $u^{(k)}$ in $X$. Also, the sequence $u^{(k)}_x \wedge u^{(k)}_y$ is Cauchy in $L^2_{[0,T]} \dot{H}^{1/2}$, by (\ref{eq:induct3}). We write $F$ for the limit of $u^{(k)}_x \wedge u^{(k)}_y$ in $L^2_{[0,T]} \dot{H}^{1/2}$. In particular, 
\begin{equation} \label{eq:FL2bdd}
\|F\|_{L^2_{[0,T]}\dot{H}^{1/2}} \leq A,
\end{equation}
and $u^{(k)}_x \wedge u^{(k)}_y$ also converges to $F$ in $L^1_{[0,T]} \dot{H}^{1/2}$. Now for $t \in [0,T]$,
\begin{equation} \label{eq:ukDuhamel}
u^{(k+1)}(t) = \cos(t\sqrt{-\Delta}) u_0  + \frac{\sin(t \sqrt{-\Delta})}{\sqrt{-\Delta}} u_1 + \int_0^t \frac{\sin((t-s)\sqrt{-\Delta})}{\sqrt{-\Delta}} 2u^{(k)}_x \wedge u^{(k)}_y (s) ds.
\end{equation}
Passing to limit in $X$, we then see that
\begin{equation} \label{eq:uinftyDuhamel}
u(t) = \cos(t\sqrt{-\Delta}) u_0  + \frac{\sin(t \sqrt{-\Delta})}{\sqrt{-\Delta}} u_1 + \int_0^t \frac{\sin((t-s)\sqrt{-\Delta})}{\sqrt{-\Delta}} 2F(s) ds.
\end{equation}
(The convergence of the right hand side in $X$ is guaranteed by the energy estimate (\ref{eq:energy32}), and the convergence of $u^{(k)}_x \wedge u^{(k)}_y$ to $F$ in $L^1_{[0,T]} \dot{H}^{1/2}$.) On the other hand, we claim that $u^{(k)}_x \wedge u^{(k)}_y$ converges to $u_x \wedge u_y$ in $L^2_{[0,T]} \dot{H}^{1/2}$. Assuming the claim for the moment, we then see that $u_x \wedge u_y = F$ on $[0,T] \times \mathbb{R}^2$, so (\ref{eq:uinftyDuhamel}) becomes
$$
u(t) = \cos(t\sqrt{-\Delta}) u_0  + \frac{\sin(t \sqrt{-\Delta})}{\sqrt{-\Delta}} u_1 + \int_0^t \frac{\sin((t-s)\sqrt{-\Delta})}{\sqrt{-\Delta}} 2u_x \wedge u_y(s) ds,
$$
and  (\ref{eq:FL2bdd}) implies
$$
\|u_x \wedge u_y \|_{L^2_{[0,T]} \dot{H}^{1/2}} \leq A,
$$
as desired. So we now move on to prove the claim.

To do so, note that 
$$
u^{(k)}_x \wedge u^{(k)}_y - u_x \wedge u_y
= u^{(k)}_x \wedge (u^{(k)}-u)_y  +  (u^{(k)} - u)_x \wedge u_y,
$$
so by (\ref{eq:wedgeest}) and (\ref{eq:uinftyDuhamel}),
\begin{align*}
&\|u^{(k)}_x \wedge u^{(k)}_y - u_x \wedge u_y\|_{L^2_{[0,T]} \dot{H}^{1/2}} \\
\leq & C (\|u_0\|_{\dot{H}^{3/2}} + \|u_1\|_{\dot{H}^{1/2}} + \|2 u^{(k-1)}_x \wedge u^{(k-1)}_y\|_{L^1_{[0,T]} \dot{H}^{1/2}}) \|2 u^{(k-1)}_x \wedge u^{(k-1)}_y - 2 F\|_{L^1_{[0,T]} \dot{H}^{1/2}} \\
& + C  \|2 u^{(k-1)}_x \wedge u^{(k-1)}_y - 2 F\|_{L^1_{[0,T]} \dot{H}^{1/2}} (\|u_0\|_{\dot{H}^{3/2}} + \|u_1\|_{\dot{H}^{1/2}} + \|2 F\|_{L^1_{[0,T]} \dot{H}^{1/2}}) \\
\leq &2 C  (K + 2T^{1/2} A)  T^{1/2} \| 2 u^{(k-1)}_x \wedge u^{(k-1)}_y - 2 F \|_{L^2_{[0,T]} \dot{H}^{1/2}} \\
\to& 0 
\end{align*}
as $k \to \infty$. (The second inequality follows from (\ref{eq:induct2}) and (\ref{eq:FL2bdd}), and the last convergence follows from our definition of $F$.) This proves our claim, and hence our existence result.

Next, for uniqueness, assume that $u$, $v \colon [0,T] \times \mathbb{R}^2 \to \mathbb{R}^3$ solves
$$
\square u = 2 u_x \wedge u_y, \quad \left.u \right|_{t=0} = u_0, \quad \left.\partial_t u\right|_{t=0} = u_1
$$
$$
\square v = 2 v_x \wedge v_y, \quad \left.v \right|_{t=0} = u_0, \quad \left.\partial_t v\right|_{t=0} = u_1
$$
with
$$
u, v \in C^0_{[0,T]} \dot{H}^{3/2}, \quad \partial_t u, \partial_t v \in C^0_{[0,T]} \dot{H}^{1/2},
$$
and
$$
\|u_x \wedge u_y\|_{L^2_{[0,T]} \dot{H}^{1/2}} \leq A , \quad \|v_x \wedge v_y\|_{L^2_{[0,T]} \dot{H}^{1/2}} \leq A.
$$
We will show $$u = v \quad \text{on $[0,T]$}.$$

To do so, note that
$$
\square (u-v) = 2 u_x \wedge u_y - 2 v_x \wedge v_y, \quad \left.(u-v) \right|_{t=0} = 0, \quad \left.\partial_t (u-v) \right|_{t=0} = 0.
$$
So by the energy estimate (\ref{eq:energy32}),
\begin{align*}
\|u-v\|_{C^0_{[0,T]} \dot{H}^{3/2}} + \|\partial_t (u-v)\|_{C^0_{[0,T]} \dot{H}^{1/2}}
\leq & C T^{1/2} \|2 u_x \wedge u_y - 2 v_x \wedge v_y\|_{L^2_{[0,T]} \dot{H}^{1/2}}.
\end{align*}
Now by (\ref{eq:wedgeest}),
\begin{align*}
& \|2 u_x \wedge u_y - 2 v_x \wedge v_y\|_{L^2_{[0,T]} \dot{H}^{1/2}} \\
\leq & 2( \|u_x \wedge (u-v)_y\|_{L^2_{[0,T]} \dot{H}^{1/2}} +   \|(u-v)_x \wedge v_y\|_{L^2_{[0,T]} \dot{H}^{1/2}} )\\
\leq &2C( \|u_0\|_{\dot{H}^{3/2}} + \|u_1\|_{\dot{H}^{1/2}} + \|2u_x \wedge u_y\|_{L^1_{[0,T]} \dot{H}^{1/2}})  \|2u_x \wedge u_y - 2v_x \wedge v_y\|_{L^1_{[0,T]} \dot{H}^{1/2}}  \\
&\quad + 2C \|2u_x \wedge u_y - 2v_x \wedge v_y\|_{L^1_{[0,T]} \dot{H}^{1/2}}  ( \|u_0\|_{\dot{H}^{3/2}} + \|u_1\|_{\dot{H}^{1/2}} + \|2v_x \wedge v_y\|_{L^1_{[0,T]} \dot{H}^{1/2}})   \\
\leq & 4CT^{1/2} (K + 2T^{1/2}A) \|2u_x \wedge u_y - 2v_x \wedge v_y\|_{L^2_{[0,T]} \dot{H}^{1/2}} \\
\leq & \frac{1}{2}   \|2u_x \wedge u_y - 2v_x \wedge v_y\|_{L^2_{[0,T]} \dot{H}^{1/2}}
\end{align*}
by (\ref{eq:Tchoice3}). Thus
$$
\|2 u_x \wedge u_y - 2 v_x \wedge v_y\|_{L^2_{[0,T]} \dot{H}^{1/2}} = 0,
$$
which implies
$$
\|u-v\|_{C^0_{[0,T]} \dot{H}^{3/2}} + \|\partial_t (u-v)\|_{C^0_{[0,T]} \dot{H}^{1/2}} = 0.
$$
So $u = v$ on $[0,T]$, as desired.

Finally, we prove the continuous dependence of the solution on initial data. Suppose $(u_0,u_1)$ and $(v_0,v_1)$ are initial data, so that
$$
\|u_0\|_{\dot{H}^{3/2}} + \|u_1\|_{\dot{H}^{1/2}} \leq K, \quad
\|v_0\|_{\dot{H}^{3/2}} + \|v_1\|_{\dot{H}^{1/2}} \leq K, 
$$
and
$$
\|u_0 - v_0\|_{\dot{H}^{3/2}} + \|u_1 - v_1 \|_{\dot{H}^{1/2}} \leq \varepsilon.
$$
Let $u$, $v$ be the unique solution to
$$
\square u = 2 u_x \wedge u_y, \quad \left.u \right|_{t=0} = u_0, \quad \left.\partial_t u\right|_{t=0} = u_1
$$
$$
\square v = 2 v_x \wedge v_y, \quad \left.v \right|_{t=0} = v_0, \quad \left.\partial_t v\right|_{t=0} = v_1
$$
with
$$
u,v \in C^0_{[0,T]} \dot{H}^{3/2}, \quad \partial_t u, \partial_t v \in C^0_{[0,T]} \dot{H}^{1/2},
$$
and
$$
\|u_x \wedge u_y\|_{L^2_{[0,T]} \dot{H}^{1/2}} \leq A, \quad \|v_x \wedge v_y\|_{L^2_{[0,T]} \dot{H}^{1/2}} \leq A.
$$
We claim
\begin{equation} \label{eq:ctsdepend}
\|u-v\|_{C^0_{[0,T]} \dot{H}^{3/2}} + \|\partial_t u - \partial_t v\|_{C^0_{[0,T]} \dot{H}^{1/2}} \leq B \varepsilon
\end{equation}
where
$$
B = \max\{4, 4C(K+2T^{1/2}A)\}.
$$
This will prove continuous dependence on initial data.

To see this, recall that $u$, $v$ are the limits in our space $X$ of a sequence $u^{(k)}$, $v^{(k)}$ respectively, where $u^{(0)} = v^{(0)} = 0$, and
$$
\square u^{(k+1)} = 2 u^{(k)}_x \wedge u^{(k)}_y, \quad \left.u^{(k+1)} \right|_{t=0} = u_0, \quad \left.\partial_t u^{(k+1)} \right|_{t=0} = u_1,
$$
$$
\square v^{(k+1)} = 2 v^{(k)}_x \wedge v^{(k)}_y, \quad \left.v^{(k+1)} \right|_{t=0} = v_0, \quad \left.\partial_t v^{(k+1)}\right|_{t=0} = v_1
$$
for all $k \geq 0$.
We will prove, by induction, that
\begin{equation} \label{eq:induct5}
\|u^{(k)}_x \wedge u^{(k)}_y -v^{(k)}_x \wedge v^{(k)}_y\|_{L^2_{[0,T]} \dot{H}^{1/2}} \leq B \varepsilon
\end{equation}
for all $k \geq 0$. Assuming this for the moment. Then the energy estimate (\ref{eq:energy32}) shows that for all $k \geq 0$,
\begin{align*}
& \|u^{(k+1)}-v^{(k+1)}\|_{C^0_{[0,T]} \dot{H}^{3/2}} + \|\partial_t u^{(k+1)} - \partial_t v^{(k+1)}\|_{C^0_{[0,T]} \dot{H}^{1/2}} \\
\leq & 2 ( \|u_0-v_0\|_{\dot{H}^{3/2}} + \|u_1-v_1\|_{\dot{H}^{1/2}} + \|2u^{(k)}_x \wedge u^{(k)}_y - 2v^{(k)}_x \wedge v^{(k)}_y \|_{L^1_{[0,T]} \dot{H}^{1/2}}) \\
\leq & 2 (\varepsilon + 2T^{1/2} B \varepsilon) \\
\leq & 2 \varepsilon + \frac{B}{2} \varepsilon \\
\leq & B \varepsilon.
\end{align*}
(The third-to-last inequality follows from (\ref{eq:induct5}), and the second-to-last follows from (\ref{eq:Tchoice1}). The last inequality follows from our choice of $B$ that $B \geq 4$.) Letting $k \to \infty$, (\ref{eq:ctsdepend}) follows.

Thus it remains to prove (\ref{eq:induct5}). It clearly holds when $k = 0$. Now suppose $k \geq 1$. 
By (\ref{eq:wedgeest}),
\begin{align*}
&\|u^{(k)}_x \wedge u^{(k)}_y -v^{(k)}_x \wedge v^{(k)}_y\|_{L^2_{[0,T]} \dot{H}^{1/2}} \\
\leq &\|u^{(k)}_x \wedge (u^{(k)}-v^{(k)})_y\|_{L^2_{[0,T]} \dot{H}^{1/2}} + \|(u^{(k)}-v^{(k)})_x \wedge v^{(k)}_y\|_{L^2_{[0,T]} \dot{H}^{1/2}} \\
\leq & C (\|u_0\|_{\dot{H}^{3/2}} + \|u_1\|_{\dot{H}^{1/2}} + \|2u^{(k-1)}_x \wedge u^{(k-1)}_y\|_{L^1_{[0,T]} \dot{H}^{1/2}}) \\
&\qquad \qquad \cdot  (\|u_0-v_0\|_{\dot{H}^{3/2}} + \|u_1-v_1\|_{\dot{H}^{1/2}} + \|2 u^{(k-1)}_x \wedge u^{(k-1)}_y - 2 v^{(k-1)}_x \wedge v^{(k-1)}_y \|_{L^1_{[0,T]} \dot{H}^{1/2}} ) \\
&+ C (\|u_0-v_0\|_{\dot{H}^{3/2}} + \|u_1-v_1\|_{\dot{H}^{1/2}} + \|2 u^{(k-1)}_x \wedge u^{(k-1)}_y - 2 v^{(k-1)}_x \wedge v^{(k-1)}_y \|_{L^1_{[0,T]} \dot{H}^{1/2}} ) \\
&\qquad \qquad \cdot (\|v_0\|_{\dot{H}^{3/2}} + \|v_1\|_{\dot{H}^{1/2}} + \|2v^{(k-1)}_x \wedge v^{(k-1)}_y\|_{L^1_{[0,T]} \dot{H}^{1/2}})
\end{align*}
By our choice of initial data $(u_0,u_1)$ and $(v_0,v_1)$, and using (\ref{eq:induct2}) with our induction hypothesis (\ref{eq:induct5}) with $k$ replaced by $k-1$, this is bounded by
\begin{align*}
& 2C (K + 2T^{1/2} A)  (\varepsilon + 2 T^{1/2} B \varepsilon),
\end{align*}
which by choice of $B$ (so that $2C(K+2T^{1/2}A) \leq \frac{B}{2}$) and (\ref{eq:Tchoice3}) is bounded by
\begin{align*}
\frac{B}{2} \varepsilon + \frac{B}{2} \varepsilon = B \varepsilon.
\end{align*}
This completes our induction, and hence the proof of our theorem.
\end{proof}

We briefly outline the proof of (\ref{eq:derkeyQ12++}). The space-time Fourier transform of $\phi_+ \cdot \psi_+$ is the convolution of $\tphi_+$ with $\tpsi_+$. We compute this convolution by testing it against a test function $\varphi(\xi,\tau)$:
\begin{align*}
&\int_{\mathbb{R}} \int_{\mathbb{R}^2} (\tphi_+ * \tpsi_+)(\xi,\tau) \varphi(\xi,\tau) d\xi d\tau \\
=& \int_{\mathbb{R}} \int_{\mathbb{R}} \int_{\mathbb{R}^2} \int_{\mathbb{R}^2} \tphi_+(\xi, \tau) \tpsi_+(\xi', \tau') \varphi(\xi + \xi', \tau + \tau') d\xi d\xi' d\tau d\tau' \\
=& \int_{\mathbb{R}^2} \int_{\mathbb{R}^2} \frac{\hat{f}(\xi)}{|\xi|} \frac{\hat{g}(\xi')}{|\xi'|} \varphi(\xi+\xi', |\xi| + |\xi'|) d\xi d\xi' \\
=& \int_{\mathbb{R}^2} \int_{\mathbb{R}^2} \frac{\hat{f}(\xi-\xi')}{|\xi-\xi'|} \frac{\hat{g}(\xi')}{|\xi'|} \varphi(\xi, |\xi-\xi'| + |\xi'|) d\xi d\xi'. 
\end{align*}
Now write $\xi' $ in polar coordinates: $\xi' = \rho \omega$ where $\rho > 0$ and $\omega \in \mathbb{S}^1$. Then the above  integral becomes
\begin{equation} \label{eq:convpol}
\int_{\mathbb{R}^2}  \int_{\mathbb{S}^1} \int_0^{\infty} \frac{\hat{f}(\xi-\rho \omega)}{|\xi-\rho \omega|} \hat{g}(\rho \omega) \varphi(\xi, |\xi-\rho \omega| + \rho) d\rho d\omega d\xi.
\end{equation}
We change variables from $\rho$ to $\tau$, where $\tau$ is a new variable defined as 
$$
\tau = |\xi - \rho \omega| + \rho;
$$
then
\begin{align*}
\tau^2 
&= |\xi - \rho \omega|^2 + \rho^2 + 2 \rho |\xi-\rho \omega| \\
&= |\xi|^2 - 2\rho \xi \cdot \omega + \rho^2 + \rho^2 + 2 \rho (\tau - \rho) \\
&= |\xi|^2 + 2\rho (\tau - \xi \cdot \omega),
\end{align*}
which implies 
\begin{equation} \label{eq:rhodef}
\rho = \rho(\xi,\tau,\omega) = \frac{\tau^2 - |\xi|^2}{2(\tau - \xi \cdot \omega)}.
\end{equation}
(Incidentally, this shows the change of variables is legitimate; it is a (smooth) bijection of $\rho \in [0,\infty)$, to $\tau \in [|\xi|, \infty)$.) Hence (\ref{eq:convpol}) becomes
$$
\int_{\mathbb{R}^2}  \int_{\mathbb{R}} \int_{\mathbb{S}^1} \chi_{\tau > |\xi|} \frac{\hat{f}(\xi-\rho \omega)}{|\xi-\rho \omega|} \hat{g}(\rho \omega) \frac{\partial \rho}{\partial \tau}  \varphi(\xi, \tau) d\omega d\tau d\xi,
$$
(henceforth $\rho$ will be defined by $\xi, \tau, \omega$ as in (\ref{eq:rhodef})). This shows the convolution $\tphi_+ * \tpsi_+$ is given by
\begin{equation} \label{eq:++conv1}
\tphi_+ * \tpsi_+ (\xi, \tau) =  \chi_{\tau > |\xi|}  \int_{\mathbb{S}^1} \frac{\hat{f}(\xi-\rho \omega)}{|\xi-\rho \omega|} \hat{g}(\rho \omega) \frac{\partial \rho}{\partial \tau}  d\omega.
\end{equation}
We further simplify this formula: 
$$
|\xi - \rho \omega| = \tau - \rho = \tau - \frac{\tau^2 - |\xi|^2}{2(\tau - \xi \cdot \omega)} = \frac{\tau^2 - 2\tau \xi \cdot \omega + |\xi|^2}{2(\tau - \xi \cdot \omega)}.
$$
Also,
\begin{equation} \label{eq:drhodtau}
\frac{\partial \rho}{\partial \tau} = \frac{(\tau - \xi \cdot \omega)(2\tau) - (\tau^2 - |\xi|^2)}{2(\tau - \xi \cdot \omega)^2} = \frac{\tau^2 - 2\tau \xi \cdot \omega + |\xi|^2}{2(\tau - \xi \cdot \omega)^2}.
\end{equation}
Hence
$$
\frac{1}{|\xi-\rho \cdot \omega|} \frac{\partial \rho}{\partial \tau} = \frac{1}{\tau - \xi \cdot \omega} = \frac{2\rho}{\tau^2-|\xi|^2},
$$
and (\ref{eq:++conv1}) becomes
\begin{equation} \label{eq:++conv2}
\tphi_+ * \tpsi_+ (\xi, \tau) =  \frac{2\chi_{\tau > |\xi|}}{\tau^2-|\xi|^2} \int_{\mathbb{S}^1}  \hat{f}(\xi-\rho \omega) \hat{g}(\rho \omega) \rho d\omega.
\end{equation}

Now to prove (\ref{eq:derkeyQ12++}), note that
\begin{align*}
& |\mathcal{F}((-\Delta)^{1/4} Q_{12}(\phi_+,\psi_+))(\xi,\tau)| 
= |\xi|^{1/2} |\mathcal{F}(Q_{12}(\phi_+,\psi_+))(\xi,\tau)| 
\end{align*}
and (at least formally)
\begin{align*}
& \mathcal{F}(Q_{12}(\phi_+,\psi_+))(\xi,\tau) \\
= &\int_{\mathbb{R}} \int_{\mathbb{R}^2} (\xi_1 \xi_2' - \xi_2 \xi_1') \tphi_+(\xi-\xi',\tau-\tau') \tpsi_+(\xi',\tau') d\xi' d\tau'.
\end{align*}
So putting absolute values inside the integral, and using
$$
|\xi|^{1/2} \leq |\xi-\xi'|^{1/2} + |\xi'|^{1/2},
$$
we see that one has
\begin{align*}
& |\mathcal{F}((-\Delta)^{1/4} Q_{12}(\phi_+,\psi_+))(\xi,\tau)| \\
\leq &\int_{\mathbb{R}} \int_{\mathbb{R}^2} |\xi_1 \xi_2' - \xi_2 \xi_1'| |\xi-\xi'|^{1/2} |\tphi_+(\xi-\xi',\tau-\tau')| |\tpsi_+(\xi',\tau')| d\xi' d\tau' \\
& \quad + \int_{\mathbb{R}} \int_{\mathbb{R}^2} |\xi_1 \xi_2' - \xi_2 \xi_1'| |\tphi_+(\xi-\xi',\tau-\tau')| |\xi'|^{1/2} |\tpsi_+(\xi',\tau')| d\xi' d\tau' \\
=& I + II
\end{align*}
The integral $II$ can be brought, via a change of variables $\xi' \mapsto \xi-\xi'$, $\tau' \mapsto \tau - \tau'$, into
$$
\int_{\mathbb{R}} \int_{\mathbb{R}^2} |\xi_1 \xi_2' - \xi_2 \xi_1'| |\xi-\xi'|^{1/2} |\tpsi_+(\xi-\xi',\tau-\tau')| |\tphi_+(\xi',\tau')| d\xi' d\tau'
$$
which is the same as integral $I$, except now the roles of $\phi_+$ and $\psi_+$ (hence the roles of $f$ and $g$) are reversed. Since the right hand side of our desired estimate (\ref{eq:derkeyQ12++}) is symmetric in $f$ and $g$, it suffices now to bound the integral $I$. But $I$ can be computed by testing against a test function as above. We then get, in a similar manner that we derived (\ref{eq:++conv2}), that
\begin{align*}
I(\xi,\tau)
=&  \frac{2\chi_{\tau > |\xi|}}{\tau^2 - |\xi|^2}  \int_{\mathbb{S}^1} \rho |\xi_1 \omega_2 - \xi_2 \omega_1|  |\hat{F}(\xi-\rho \omega)| |\hat{g}(\rho \omega)| \rho d\omega
\end{align*}
where 
$$
F:= (-\Delta)^{1/4} f.
$$
It follows that
\begin{align*}
|I(\xi,\tau)|^2 &\lesssim \frac{\chi_{\tau > |\xi|}}{(\tau^2-|\xi|^2)^2} \int_{\mathbb{S}^1} \rho^2 |\xi_1 \omega_2 - \xi_2 \omega_1|^2 |\hat{F}(\xi-\rho \omega)|^2 |\hat{g}(\rho \omega)|^2 \rho^2 d\omega.
\end{align*}
We now integrate with respect to $\xi$ and $\tau$, use Fubini's theorem, and change variables $\tau \mapsto \rho$. Then
\begin{align}
&\int_{\mathbb{R}^2} \int_{\mathbb{R}} |I(\xi,\tau)|^2 d\tau d\xi \notag \\
\lesssim& \int_{\mathbb{R}^2} \int_{\mathbb{S}^1} \int_{\rho = 0}^{\rho = \infty} \frac{\rho^2 |\xi_1 \omega_2 - \xi_2 \omega_1|^2}{(\tau^2-|\xi|^2)^2} \frac{\partial \tau}{\partial \rho}  |\hat{F}(\xi-\rho \omega)|^2 |\hat{g}(\rho \omega)|^2 \rho^2 d\rho d\omega d\xi. \label{eq:intI2}
\end{align}
But we claim
\begin{equation} \label{eq:keyQ12++}
\frac{\rho^2 (\xi_1 \omega_2 - \xi_2 \omega_1 )^2}{(\tau^2-|\xi|^2)^2}  \frac{\partial \tau}{\partial \rho} \leq \frac{1}{2}.
\end{equation}
In fact, remembering $\frac{\partial \tau}{\partial \rho}$ is the reciprocal of $\frac{\partial \rho}{\partial \tau}$, which we computed in (\ref{eq:drhodtau}), we have
\begin{align*}
\frac{\rho^2 (\xi_1 \omega_2 - \xi_2 \omega_1 )^2}{(\tau^2-|\xi|^2)^2}  \frac{\partial \tau}{\partial \rho} 
=& \frac{\rho^2 (\xi_1 \omega_2 - \xi_2 \omega_1 )^2 [2 (\tau - \xi \cdot \omega)^2]}{(\tau^2-|\xi|^2)^2 (\tau^2 - 2 \tau \xi \cdot \omega + |\xi|^2)} \\
=& \frac{(\xi_1 \omega_2 - \xi_2 \omega_1 )^2}{2(\tau^2 - 2 \tau \xi \cdot \omega + |\xi|^2)}
\end{align*}
But the denominator can be simplified, by writing
\begin{align*}
\tau^2 - 2 \tau \xi \cdot \omega + |\xi|^2
&= (\tau - \xi \cdot \omega)^2 + |\xi|^2 - (\xi \cdot \omega)^2 \\
&= (\tau - \xi \cdot \omega)^2 + (\xi_1 \omega_2 - \xi_2 \omega_1)^2
\end{align*}
(the last equality holds since we are in 2-dimensions). Thus we see that the above quotient is bounded by $1/2$, as was claimed in (\ref{eq:keyQ12++}). 

Now by (\ref{eq:intI2}) and (\ref{eq:keyQ12++}), we see that
\begin{align*}
\int_{\mathbb{R}^2} \int_{\mathbb{R}} |I(\xi,\tau)|^2 d\tau d\xi 
\leq &C \int_{\mathbb{R}^2} \int_{\mathbb{S}^1} \int_{\rho = 0}^{\rho = \infty}  |\hat{F}(\xi-\rho \omega)|^2 |\hat{g}(\rho \omega)|^2 \rho^2 d\rho d\omega d\xi \\
=& C \|F\|_{L^2}^2 \|g\|_{\dot{H}^{1/2}}^2 \\
=& C \|f\|_{\dot{H}^{1/2}}^2 \|g\|_{\dot{H}^{1/2}}^2.
\end{align*}
This completes the proof of (\ref{eq:derkeyQ12++}).

\begin{bibdiv}
\begin{biblist}

\bib{MR733715}{article}{
   author={Brezis, Ha{\"{\i}}m},
   author={Coron, Jean-Michel},
   title={Multiple solutions of $H$-systems and Rellich's conjecture},
   journal={Comm. Pure Appl. Math.},
   volume={37},
   date={1984},
   number={2},
   pages={149--187},
}

\bib{MR784102}{article}{
   author={Brezis, Ha{\"{\i}}m},
   author={Coron, Jean-Michel},
   title={Convergence of solutions of $H$-systems or how to blow bubbles},
   journal={Arch. Rational Mech. Anal.},
   volume={89},
   date={1985},
   number={1},
   pages={21--56},
}

\bib{MR2221202}{article}{
   author={Caldiroli, Paolo},
   author={Musina, Roberta},
   title={The Dirichlet problem for H-systems with small boundary
     data: blow-up phenomena and nonexistence results},
   journal={Arch. Rational Mech. Anal.},
   volume={181},
   date={2006},
   number={1},
   pages={1--42},
}

\bib{MR2154671}{article}{
   author={Chanillo, Sagun},
   author={Malchiodi, Andrea},
   title={Asymptotic Morse theory for the equation $\Delta v = 2 v_x \wedge v_y$},
   journal={Communications in Analysis and Geometry},
   volume={13},
   date={2005},
   number={1},
   pages={187--251}
}

\bib{MR3010056}{article}{
   author={Chanillo, Sagun},
   author={Yung, Po-Lam},
   title={Wave equations associated to Liouville systems and constant mean
   curvature equations},
   journal={Adv. Math.},
   volume={235},
   date={2013},
   pages={187--207},
}

\bib{MR0058082}{article}{
   author={Hartman, Philip},
   author={Wintner, Aurel},
   title={On the local behavior of solutions of non-parabolic partial
   differential equations},
   journal={Amer. J. Math.},
   volume={75},
   date={1953},
   pages={449--476},
}

\bib{MR0256276}{article}{
   author={Hildebrandt, Stefan},
   title={On the Plateau problem for surfaces of constant mean curvature},
   journal={Comm. Pure Appl. Math.},
   volume={23},
   date={1970},
   pages={97--114},
}

\bib{MR2461508}{article}{
   author={Kenig, Carlos E.},
   author={Merle, Frank},
   title={Global well-posedness, scattering and blow-up for the
   energy-critical focusing non-linear wave equation},
   journal={Acta Math.},
   volume={201},
   date={2008},
   number={2},
   pages={147--212},
}

\bib{MR1231427}{article}{
   author={Klainerman, Sergiu},
   author={Machedon, Matei},
   title={Space-time estimates for null forms and the local existence
   theorem},
   journal={Comm. Pure Appl. Math.},
   volume={46},
   date={1993},
   number={9},
   pages={1221--1268},
}

\bib{MR1381973}{article}{
   author={Klainerman, Sergiu},
   author={Machedon, Matei},
   title={Smoothing estimates for null forms and applications},
   note={A celebration of John F. Nash, Jr.},
   journal={Duke Math. J.},
   volume={81},
   date={1995},
   number={1},
   pages={99--133 (1996)},
}

\bib{MR1446618}{article}{
   author={Klainerman, Sergiu},
   author={Machedon, Matei},
   title={On the regularity properties of a model problem related to wave
   maps},
   journal={Duke Math. J.},
   volume={87},
   date={1997},
   number={3},
   pages={553--589},
}

\bib{MR1452172}{article}{
   author={Klainerman, Sergiu},
   author={Selberg, Sigmund},
   title={Remark on the optimal regularity for equations of wave maps type},
   journal={Comm. Partial Differential Equations},
   volume={22},
   date={1997},
   number={5-6},
   pages={901--918},
}

\bib{MR1901147}{article}{
   author={Klainerman, Sergiu},
   author={Selberg, Sigmund},
   title={Bilinear estimates and applications to nonlinear wave equations},
   journal={Commun. Contemp. Math.},
   volume={4},
   date={2002},
   number={2},
   pages={223--295},
}

\bib{MR1990880}{article}{
   author={Krieger, Joachim},
   title={Global regularity of wave maps from ${\bf R}^{3+1}$ to
   surfaces},
   journal={Comm. Math. Phys.},
   volume={238},
   date={2003},
   number={1-2},
   pages={333--366},
}

\bib{MR2094472}{article}{
   author={Krieger, Joachim},
   title={Global regularity of wave maps from $\bold R^{2+1}$ to $H^2$. Small energy},
   journal={Comm. Math. Phys.},
   volume={250},
   date={2004},
   number={3},
   pages={507--580},
}

\bib{MR2895939}{book}{
   author={Krieger, Joachim},
   author={Schlag, Wilhelm},
   title={Concentration compactness for critical wave maps},
   series={EMS Monographs in Mathematics},
   publisher={European Mathematical Society (EMS), Z\"urich},
   date={2012},
   pages={vi+484},
}

\bib{MR2372807}{article}{
   author={Krieger, J.},
   author={Schlag, W.},
   author={Tataru, D.},
   title={Renormalization and blow up for charge one equivariant critical
   wave maps},
   journal={Invent. Math.},
   volume={171},
   date={2008},
   number={3},
   pages={543--615},
}

\bib{MR2929728}{article}{
   author={Rapha{\"e}l, Pierre},
   author={Rodnianski, Igor},
   title={Stable blow up dynamics for the critical co-rotational wave maps
   and equivariant Yang-Mills problems},
   journal={Publ. Math. Inst. Hautes \'Etudes Sci.},
   date={2012},
   pages={1--122},
}

\bib{MR2680419}{article}{
   author={Rodnianski, Igor},
   author={Sterbenz, Jacob},
   title={On the formation of singularities in the critical ${\rm O}(3)$
   $\sigma$-model},
   journal={Ann. of Math. (2)},
   volume={172},
   date={2010},
   number={1},
   pages={187--242},
}

\bib{MR1674843}{book}{
   author={Shatah, Jalal},
   author={Struwe, Michael},
   title={Geometric wave equations},
   series={Courant Lecture Notes in Mathematics},
   volume={2},
   publisher={New York University, Courant Institute of Mathematical
   Sciences, New York; American Mathematical Society, Providence, RI},
   date={1998},
   pages={viii+153},
}

\bib{MR2657818}{article}{
   author={Sterbenz, Jacob},
   author={Tataru, Daniel},
   title={Regularity of wave-maps in dimension $2+1$},
   journal={Comm. Math. Phys.},
   volume={298},
   date={2010},
   number={1},
   pages={231--264},
}

\bib{MR2657817}{article}{
   author={Sterbenz, Jacob},
   author={Tataru, Daniel},
   title={Energy dispersed large data wave maps in $2+1$ dimensions},
   journal={Comm. Math. Phys.},
   volume={298},
   date={2010},
   number={1},
   pages={139--230},
}

\bib{MR823116}{article}{
   author={Struwe, Michael},
   title={Nonuniqueness in the Plateau problem for surfaces of constant mean
   curvature},
   journal={Arch. Rational Mech. Anal.},
   volume={93},
   date={1986},
   number={2},
   pages={135--157},
}

\bib{MR774369}{article}{
   author={Struwe, Michael},
   title={Large $H$-surfaces via the mountain-pass-lemma},
   journal={Math. Ann.},
   volume={270},
   date={1985},
   number={3},
   pages={441--459},
}

\bib{MR926524}{article}{
   author={Struwe, Michael},
   title={The existence of surfaces of constant mean curvature with free
   boundaries},
   journal={Acta Math.},
   volume={160},
   date={1988},
   number={1-2},
   pages={19--64},
}

\bib{MR1820329}{article}{
   author={Tao, Terence},
   title={Global regularity of wave maps. I. Small critical Sobolev norm in
   high dimension},
   journal={Internat. Math. Res. Notices},
   date={2001},
   number={6},
   pages={299--328},
}
	
\bib{MR1869874}{article}{
   author={Tao, Terence},
   title={Global regularity of wave maps. II. Small energy in two
   dimensions},
   journal={Comm. Math. Phys.},
   volume={224},
   date={2001},
   number={2},
   pages={443--544},
}

\bib{MR1641721}{article}{
   author={Tataru, Daniel},
   title={Local and global results for wave maps. I},
   journal={Comm. Partial Differential Equations},
   volume={23},
   date={1998},
   number={9-10},
   pages={1781--1793},
}

\bib{MR1827277}{article}{
   author={Tataru, Daniel},
   title={On global existence and scattering for the wave maps equation},
   journal={Amer. J. Math.},
   volume={123},
   date={2001},
   number={1},
   pages={37--77},
}

\bib{MR2130618}{article}{
   author={Tataru, Daniel},
   title={Rough solutions for the wave maps equation},
   journal={Amer. J. Math.},
   volume={127},
   date={2005},
   number={2},
   pages={293--377},
}

\bib{MR0243467}{article}{
   author={Wente, Henry C.},
   title={An existence theorem for surfaces of constant mean curvature},
   journal={J. Math. Anal. Appl.},
   volume={26},
   date={1969},
   pages={318--344},
}

\bib{MR0374673}{article}{
   author={Wente, Henry C.},
   title={The differential equation $\Delta x=2H(x_{u}\wedge x_{v})$
   with vanishing boundary values},
   journal={Proc. Amer. Math. Soc.},
   volume={50},
   date={1975},
   pages={131--137},
}

\end{biblist}
\end{bibdiv}

\end{document}